\newtheorem{corollary}{Corollary}
\newtheorem{proposition}{Proposition}
\newtheorem{lemma}{Lemma}
\newtheorem{theorem}{Theorem}
\newtheorem{remark}{Remark}
\newtheorem{conjecture}{Conjecture}
\newcommand{\pr}[1]{{\mathbb P}\{#1\}}
\newcommand{\keywords}[1]{\textit{Keywords}: #1}
\newcommand{\noopsort}[1]{}  
\title{Power-of-two sampling in redundancy systems:\\ the impact of assignment constraints}
\author[a]{Ellen~Cardinaels\thanks{Corresponding author. E-mail address: \href{mailto:e.cardinaels@tue.nl}{e.cardinaels@tue.nl}}}
\author[a]{Sem~Borst}
\author[b]{Johan~S.H.~van~Leeuwaarden}
\affil[a]{\small \textit{Eindhoven University of Technology, The Netherlands}}
\affil[b]{\small \textit{Tilburg University, The Netherlands}}
\date{} 
\begin{document}
\maketitle

\begin{abstract}
A classical sampling strategy for load balancing policies is power-of-two, where any server pair is sampled with equal probability. This does not cover practical settings with assignment constraints which force non-uniform sampling. While intuition suggests that non-uniform sampling adversely impacts performance, this was only supported through simulations, and rigorous statements have remained elusive.  Building on product-form distributions for redundancy systems, we prove the stochastic dominance of uniform sampling for a four-server system as well as arbitrary-size systems in light traffic.
\end{abstract}

\keywords power-of-two, parallel-server systems, load balancing, redundancy scheduling, light traffic, stochastic comparison.

\section{Introduction} 
Load balancing policies applying a power-of-$d$ sampling strategy assign a job to the `best' among a randomly selected subset of ${d \geq 2}$ parallel servers. These policies were originally investigated in a balls-and-bins context where it was shown that the maximum bin occupancy is exponentially reduced when instead of purely random assignment the least loaded bin among the $d$ selected bins is chosen~\cite{Azar1999}.
This concept was also shown to be highly effective in queueing contexts, in particular in the many-server regime where the above policy results in a doubly exponential improvement in terms of the queue length distribution per server compared to purely random assignment~\cite{mitzenmacher2001power,vvedenskaya1996queueing}. Moreover, power-of-$d$ policies only involve a low implementation overhead and hence provide scalability, which is critical in large-scale systems such as data centers. More recently, power-of-$d$ sampling policies have also been proposed in the context of redundancy scheduling where replicas of each job are dispatched to a randomly selected subset of $d$~servers~\cite{gardner2017redundancy}. We refer to~\cite{Boor2018} for further background on scalable load balancing algorithms.

In the classical power-of-$d$ setup, it is implicitly assumed that the $d$~servers are selected uniformly at random, with or without replacement. This is a natural assumption when all jobs and servers are mutually exchangeable, and also mathematically convenient (see the discussion of related literature below for further details).
However, this assumption excludes situations with assignment constraints, such as locality constraints or compatibility relations between jobs and servers, which force non-uniform server sampling.
In the special case that $d=2$ such assignment constraints can conveniently be visualized in a graph consisting of $N$~nodes, one for each server. When a non-negative weight is associated with each edge, edges or server pairs can be sampled according to these weights. When server pairs are sampled uniformly at random, all edges receive a weight equal to $1/\binom{N}{2}$. This raises the following interesting question: \emph{How do non-uniform edge weights affect the system performance, as compared to uniform edge weights?} One intuitively  expects performance to benefit from more flexibility (i.e., more non-zero weights) and homogeneity (i.e., uniform edge weights). This intuition was supported through heuristic arguments and simulations for the Join-the-Shortest-Queue policy in specific topological settings, see for instance the thesis of Mitzenmacher~\cite{Mitzenmacher_thesis}, the seminal paper of Turner~\cite{Turner1998} and the more recent work of Gast~\cite{gast2015power}.
However to the best of our knowledge, rigorous statements on the performance impact of assignment constraints in the power-of-choice setting have remained elusive so far.

In the present paper we establish stochastic comparison results which corroborate the above-mentioned `common wisdom' in redundancy systems, and prove in some specific settings that the classical uniform power-of-two policy outperforms \textit{any} power-of-two policy with assignment constraints.
To establish these results we employ the product-form expressions for the stationary  occupancy distribution obtained by Gardner {\em et al.}~\cite{Gardner2016queueing}. Unfortunately, the detailed job-level state description yields expressions that do not give immediate insight into the system performance. Careful inspection and further manipulation of the detailed product-form expressions, however, allows us to derive stochastic comparison results.

We first establish closed-form expressions for the stationary distribution of the total number of jobs for four-server systems, which we then use to show a stochastic ordering result for a ring graph compared to a complete graph, confirming the above intuition. For systems of arbitrary size, closed-form expressions for the stationary distribution of the total number of jobs in the system seem out of reach. However, focusing on a light-traffic scenario allows us to extract the essential information to compare the stationary distributions of systems with different edge selection probabilities. This comparison gives rise to an optimization problem in terms of the edge selection probabilities for which the classical uniform power-of-two policy arises as the optimal solution. Moreover, the light-traffic comparison can be interpreted as a design guideline for an efficient weighted power-of-two policy in the presence of assignment constraints. 

The literature focusing on the classical uniform power-of-$d$ sampling policy is extensive and vibrant as the inherent symmetry of these policies lends itself well to asymptotic analysis in a many-server regime.
Seminal results in such settings were obtained by Mitzenmacher~\cite{mitzenmacher2001power} and Vvedenskaya {\em et al.}~\cite{vvedenskaya1996queueing} using fluid-limit techniques, and later closely related mean-field concepts were studied in~\cite{gardner2017redundancy,Hellemans2019,Hellemans2018,
Hellemans2021}.

As in~\cite{Jaleel2021,Gardner2021,Zhan2018}, these techniques can also be used for the analysis of particular asymmetric assignment constraints where mutually exchangeable servers are clustered in a finite number of pools.
However, fluid-limit and mean-field techniques are usually not well-suited to scenarios with asymmetric assignment constraints corresponding to a graph as mentioned above.
Indeed, Gast~\cite{gast2015power} and Turner~\cite{Turner1998} use an approximation scheme and simulations to demonstrate that the classical Join-the-Shortest-Queue(2) (JSQ(2)) policy outperforms a restricted JSQ(2) policy where the assignment is governed by a ring graph. In contrast, the results in \cite{Budhiraja2019,Mukherjee2018,Rutten2020,Weng2020} establish conditions in terms of the assignment constraints that yield performance comparable to the classical uniform power-of-$d$ policies in a many-server regime.

Non-uniform selection of subsets of servers in a JSQ context has also been considered by He and Down~\cite{He2008} and Sloothaak {\em et al.}~\cite{Sloothaak2021}, where it is shown that the diffusion scaled queue length process coincides with that of a fully pooled system in a heavy-traffic regime. In \cite{Cardinaels2022} conditions in terms of the assignment constraints are established to draw a similar conclusion for redundancy policies.
Rather than imposing conditions on the assignment constraints, our results aim to connect and compare the performance of systems operating under the classical uniform power-of-two sampling policies and those with assignment constraints. We will focus on systems with a fixed number of servers in moderate or light traffic.

The remainder of this paper is organized as follows. In Section~\ref{sec:model_description} we
present a detailed model description and discuss some broader context and preliminaries. In Subsection~\ref{subsec:intro_example} we set out to prove a stochastic comparison between two small systems. Next we consider systems of arbitrary size in Subsection~\ref{sec:main_results}, and establish a light-traffic comparison between the classical uniform power-of-two policy and weighted power-of-two policies.
A discussion of the results and some pointers for further research are provided in Section~\ref{sec:extensions}. 

\section{Model description and preliminaries} \label{sec:model_description}

\subsection{Model description}
Before elaborating on redundancy scheduling, we first define the power-of-two policies to sample server pairs subject to the assignment constraints. 
Jobs arrive according to a Poisson process with rate~$N\lambda$, with $N$ the total number of parallel servers.
When a job arrives, the server pair available for its assignment is $\{i,j\}$ with probability $p_{\{i,j\}}$, $i,j,\in\{1,\dots,N\}$ and $i\neq j$. For simplicity we refer to such a job as a type-$\{i,j\}$ job. Due to the properties of the Poisson process, one can equivalently take the view that type-$\{i,j\}$ jobs arrive according to a Poisson process with rate $N\lambda p_{\{i,j\}}$. 

As alluded to in the introduction, one can think of an underlying simple graph structure with $N$~nodes and (non-negative) edge weight $p_{\{i,j\}}$ for the edge $\{i,j\}$. Server pairs are then sampled proportionally to these edge weights for each arriving job.
 Let $\mathcal{E} \coloneqq \{ \{i,j\} \mid p_{\{i,j\}}>0, i,j=1,\dots,N, i\neq j\}$ denote the set of all edges with a non-zero weight, or alternatively, all possible job types that can occur in the system. Without loss of generality we assume that $\sum_{e\in\mathcal{E}}p_e = 1$, and refer to $p_e$ as the selection probability of the edge $e$. An example where this underlying graph is given by a ring graph is depicted in Figure~\ref{fig:example1}.

The setting where $p_{\{i,j\}} \equiv 1/E$ for all $i\neq j$, with $E=\binom{N}{2}$ the total number of different server pairs, corresponds to the typical power-of-two setting with uniform sampling. We will henceforth refer to this setting as the \textit{classical} power-of-two policy, while any setting with non-uniform sampling is referred to as a \textit{weighted} power-of-two policy.

\begin{remark}\upshape
When jobs can be assigned to $d\ge2$~servers, one can think of $p_e$ as the selection probability of hyper-edge $e$ in a hypergraph with $N$ nodes where each hyper-edge is incident to $d$ distinct nodes. In the remainder of this paper we will focus on the case where $d=2$.
\end{remark}

\begin{figure}[h]

\begin{subfigure}[t]{0.25\textwidth}
  \centering
\includegraphics{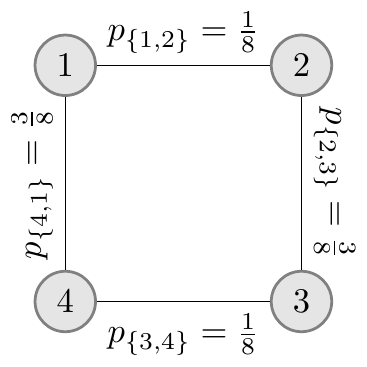}
  \caption{A ring as an underlying graph structure.}
  \label{fig:example1}
\end{subfigure}%
~
\begin{subfigure}[t]{.37\textwidth}
  \centering
  \includegraphics{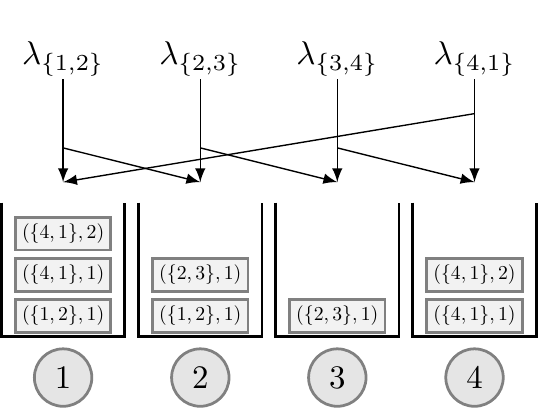}
  \caption{Representation with one queue per server and replicas.}
  \label{fig:example2}
\end{subfigure}
\begin{subfigure}[t]{0.37\textwidth}
  \centering 
  \includegraphics{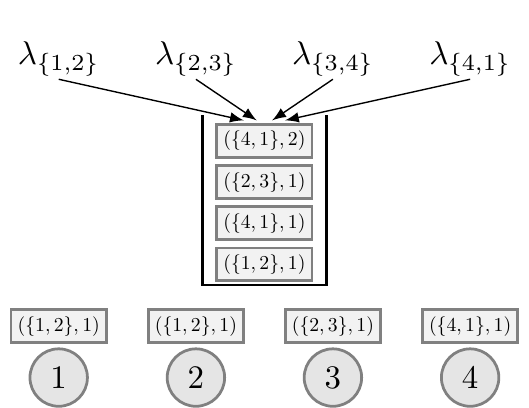}
  \caption{Central queue representation.}
  \label{fig:example3}
\end{subfigure}
\caption{With the underlying graph structure as indicated in Panel~(a) with ${N=4}$ servers, Panels~(b) and~(c) give two different representations of the state ${\boldsymbol{c} = (\{1,2\},\{4,1\},\{2,3\},\{4,1\})}$. The notation $(\{i,j\},k)$ stands for the $k$th arrival of a type-$\{i,j\}$ job.}
\label{fig:example}
\end{figure}

For an arriving type-$\{i,j\}$ job, under the redundancy policy, replicas are assigned to both servers $i$ and $j$, with $i,j,\in\{1,\dots,N\}$ and $i\neq j$. The service requirements of the two replicas are independent and exponentially distributed with unit mean. Each server has speed $\mu>0$ and handles the assigned jobs in a First-Come-First-Served manner. Once the first replica finishes service, the remaining replica will be discarded instantaneously. 

As discussed in the introduction, it is intuitively plausible that uniform sampling outperforms non-uniform sampling. This intuitive notion is formalized in the following conjecture.

\begin{conjecture}\label{conj:dom_red}
Let $Q^*$ and $Q(P)$ denote the total number of jobs in stationarity in a redundancy system with $N$~servers operating according to the classical power-of-two policy and a weighted power-of-two policy with edge selection probabilities $P=(p_{\{i,j\}})_{i,j}$, respectively. Then, $Q^*$ is stochastically smaller than $Q(P)$, i.e., 
\begin{equation} \label{eq:stoch_dom_result_conj}
Q^* \le_{\mathrm{st}} Q(P).
\end{equation}
\end{conjecture}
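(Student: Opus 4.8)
The plan is to exploit the job-level product-form stationary distribution of Gardner \emph{et al.}~\cite{Gardner2016queueing}. For a state $\boldsymbol{c}=(c_1,\dots,c_n)$ listing the job types in order of arrival, writing $S(e)=\{i,j\}$ for the two servers compatible with edge $e=\{i,j\}$ and $U_i(\boldsymbol{c})\coloneqq\bigcup_{\ell=1}^{i}S(c_\ell)$ for the set of servers occupied by the $i$ oldest jobs, this distribution reads $\pi(\boldsymbol{c})\propto\prod_{i=1}^{n}\lambda_{c_i}/(\mu\,|U_i(\boldsymbol{c})|)$ with $\lambda_e=N\lambda p_e$. Summing over all sequences of a given length and setting $\rho\coloneqq N\lambda/\mu$, the marginal law of the total number of jobs factorizes as $\pr{Q(P)=n}=\pi_\emptyset\,\rho^{\,n}H_n(P)$, where $\pi_\emptyset$ is the empty-state probability and
\[
H_n(P)\;\coloneqq\;\sum_{\boldsymbol{c}\,:\,|\boldsymbol{c}|=n}\ \prod_{i=1}^{n}\frac{p_{c_i}}{|U_i(\boldsymbol{c})|}
\]
collects all the dependence on the selection probabilities. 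First I would reduce the conjecture to a comparison of the two coefficient sequences $H_n(P)$ and $H_n^*\coloneqq H_n(P^*)$, where $P^*$ is the uniform policy; since $\pi_\emptyset$ and $\rho^{\,n}$ are common to both systems, the normalizing constants cancel in every likelihood ratio.

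Because the empty-state and single-job coefficients agree for every policy---indeed $H_0\equiv1$ and $H_1(P)=\tfrac12\sum_{e}p_e=\tfrac12$ since $|S(e)|=2$---the two laws first separate at $n=2$. This suggests aiming for the likelihood-ratio ordering $Q^*\le_{\mathrm{lr}}Q(P)$, which is stronger than \eqref{eq:stoch_dom_result_conj} and holds precisely when the ratio $H_n(P)/H_n^*$ is nondecreasing in $n$, equivalently when the log-supermodularity inequalities $H_n^*\,H_{n+1}(P)\ge H_n(P)\,H_{n+1}^*$ hold for all $n\ge1$. Should monotonicity of the ratio prove too strong, the fallback is to establish the stochastic ordering directly through the tail-sum inequalities $\sum_{n\ge m>k}\bigl(H_n^*H_k(P)-H_k^*H_n(P)\bigr)\le0$ for every $m$, which are exactly equivalent to \eqref{eq:stoch_dom_result_conj} and are implied by, but weaker than, the likelihood-ratio condition.

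The two quantitative pillars would then be: (i) the \emph{level} inequality $H_n(P)\ge H_n^*$ for each fixed $n$, and (ii) its propagation across consecutive levels. For (i) the natural mechanism is majorization: the uniform weight vector $P^*$ is majorized by every admissible $P$, so if $H_n$ were Schur-convex in the selection vector, the uniform policy would minimize it. The base case $n=2$ is encouraging, since $H_2(P)=\tfrac12\sum_{c_1,c_2}p_{c_1}p_{c_2}/|U_2(\boldsymbol{c})|$ is a quadratic form whose minimization over the simplex is exactly the optimization problem solved in the light-traffic analysis and is attained at $P^*$; this also recovers the four-server ring-versus-complete comparison as the special case in which the ring vector strictly majorizes $P^*$. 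To organize the higher levels I would use the subset recursion $g_0(U)\equiv1$, $g_m(U)=\sum_{e}\tfrac{p_e}{|U\cup S(e)|}\,g_{m-1}(U\cup S(e))$ with $H_n(P)=g_n(\emptyset)$, which exposes $H_n$ as a contraction over the $2^N$ possible occupied-server sets and is what makes the four-server case tractable in closed form.

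The main obstacle is that the denominators $|U_i(\boldsymbol{c})|$ couple the edge selections through the incidence structure of the graph in a way that is neither separable nor symmetric: $H_n$ is \emph{not} a symmetric function of $(p_e)_{e}$, because two edges contribute differently depending on whether they share a vertex, so classical Schur-convexity does not apply verbatim and must be replaced by a majorization argument adapted to the graph automorphisms. Equally delicate is coupling the two policies at the sample-path level, since the weighted and uniform chains live on state spaces with different reachable supports and transition rates, so a direct monotone coupling seems out of reach and the comparison must be carried out analytically through the $H_n$. Controlling $H_n$ for \emph{all} $n$ simultaneously and uniformly over the graph---rather than the finitely many terms that suffice for the four-server closed form, or the leading $H_2$ term that governs light traffic---is the crux that keeps the statement at the level of a conjecture.
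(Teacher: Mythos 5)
Your $H_n(P)$ is exactly the paper's $\alpha_n(P)$ from \eqref{eq:alpha_k}, and your roadmap reproduces the paper's own framework rather than closing the statement: the statement is posed as a conjecture precisely because neither of your two pillars is established, and your plan does not establish them either. The ratio-monotonicity condition $H_n^*H_{n+1}(P)\ge H_n(P)H_{n+1}^*$ is the paper's condition \eqref{eq:condition_non_LT} (Remark~\ref{remark:BD}, proved sufficient for $\le_{\mathrm{st}}$ in Theorem~\ref{th:BD_extension} via a birth--death reformulation), but it is not verified for any nontrivial family of $P$ beyond $n=1$ (where it degenerates, since $H_1\equiv 1/2$). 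The level inequality $H_n(P)\ge H_n^*$ is the paper's Conjecture~\ref{conj:alpha}; it is currently known only for $n\le 9$, and the mechanism that works there is convexity of $\alpha_n$ combined with its invariance under server relabelings (Brosch--Laurent--Steenkamp, Polak), not majorization --- as you yourself note, Schur-convexity is inapplicable because $\alpha_n$ is not a symmetric function of the edge weights, and you do not supply the ``adapted majorization argument'' that would replace it. So the plan identifies the correct reduction but leaves both quantitative steps open; it is a restatement of the open problem, and your final sentence concedes as much.

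Two further cautions. First, ``the normalizing constants cancel'' needs care: $\pi_\emptyset$ depends on $P$, so while a constant prefactor is harmless when checking monotonicity of the likelihood ratio, the level inequalities of pillar~(i) alone are \emph{not} sufficient for stochastic dominance at fixed $\lambda$ --- this is exactly the warning in Remark~\ref{remark:BD}. Even if $H_n(P)\ge H_n^*$ held for every $n$, that would only yield the light-traffic comparison of Theorem~\ref{th:light_traffic}, where the normalization tends to $1$; at fixed $\lambda$ one genuinely needs the strictly stronger ratio condition. Second, your tail-sum criterion omits the $\rho^{n+k}$ weights; the correct equivalent of \eqref{eq:stoch_dom_result_conj} is $\sum_{n\ge m}\sum_{k<m}\rho^{n+k}\bigl(H_n^*H_k(P)-H_n(P)H_k^*\bigr)\le 0$ for all $m\ge 1$, which is what the appendix verifies term by term under \eqref{eq:condition_non_LT}.
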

We will establish stochastic comparison results and light-traffic limits to support the above conjecture, building on the product-form expressions for redundancy systems that will be outlined in the next subsection.

\begin{remark}\label{remark:stability}\upshape
Conjecture~\ref{conj:dom_red} implicitly assumes both $Q^*$ and $Q(P)$ to exist.  
As shown in~\cite{Gardner2016queueing} this is the case for $Q^*$ if and only if $\lambda<\mu$. It can be further deduced from~\cite{Gardner2016queueing} that the latter condition is also necessary for $Q(P)$ to exist. The sufficient condition requires the aggregate arrival rate of any subset of edges to be strictly smaller than the aggregate service rate of the servers at its endpoints, i.e., for all  $S\subseteq \mathcal{E}$ it must hold that $N\lambda \sum_{\{i,j\}\in S} p_{\{i,j\}} < \mu | \cup_{\{i,j\}\in S} \{i,j\} |$.
\end{remark}

\begin{remark}\upshape \label{rem:HT}
Note that Conjecture~\ref{conj:dom_red} contrasts with the universality result in~\cite[Theorem~1]{Cardinaels2022} in a \textit{heavy-traffic} regime.
In particular, it is shown in~\cite{Cardinaels2022} that for a broad range of weighted policies the system in heavy traffic achieves complete resource pooling and exhibits state space collapse.
\end{remark}

\subsection{Product-form expressions}
The system occupancy at time~$t$ of a system operating under the redundancy policy may be represented in terms of a vector $(c_1, \dots, c_{Q(t)})$, with $Q(t)$ denoting the total number of jobs in the system at time~$t$ and $c_q \in {\mathcal E}$. 
One can think of the occupancy vector as a central queue where $c_q \in {\mathcal E}$ indicates the type of the $q$th oldest job in the system at time $t$. It is easily verified that the system occupancy evolves as a Markov process by virtue of the exponential traffic assumptions.

From a modeling perspective, the state description yields a system that is equivalent to a system where replicas are positioned in two dedicated queues in front of the two selected servers. To illustrate this, consider the graph structure depicted in Figure~\ref{fig:example1} with $N=4$ servers and assume that the state of the system is given by $\boldsymbol{c} = (\{1,2\},\{4,1\},\{2,3\},\{4,1\})$. Hence, the oldest job in the system is replicated to servers~1 and~2, the second oldest job is replicated to servers~1 and~4, etc. Figure~\ref{fig:example2} represents the system with dedicated queues at each of the servers, while Figure~\ref{fig:example3} represents the system with a centralized queue. In the latter case a server that becomes idle scans this central queue and will initiate service of a replica of the first job it is compatible with.

It was shown in \cite{Gardner2016queueing} that, under the stability conditions mentioned in Remark~\ref{remark:stability},
the stationary distribution of the system occupancy is
\begin{equation}
\label{eq:statdistr}
\pi(c_1, \dots, c_Q) = C\prod_{i =1}^Q \frac{N\lambda p_{c_i}}{\mu(c_1, \dots, c_i)},
\end{equation}
with $C$ a normalization constant and
\begin{equation}\label{eq:mu}
\mu(c_1, \dots, c_i) = \mu |\bigcup\limits_{j = 1}^{i} \{c_j\}|
\end{equation}
the aggregate service rate of the system in the state $(c_1,\dots,c_i)$.
For instance, the stationary probability of state~$\boldsymbol{c}$ depicted in Figure~\ref{fig:example} is given by $\pi(\boldsymbol{c})= C (\frac{4\lambda}{\mu})^4\cdot\frac{p_{\{1,2\}}}{2}\frac{p_{\{4,1\}}}{3}
\frac{p_{\{2,3\}}}{4}\frac{p_{\{4,1\}}}{4}$. 

\section{Main results}
We now use the product-form expressions to assess the performance of various systems with respect to their assignment constraints.
However, the detailed job-level state description ingrained in the product-form expressions does not provide much insight into the overall performance and does not allow a direct comparison. In order to make a meaningful comparison, we therefore consider the stationary distribution of the total number of jobs in the system,~$Q$. This distribution may be expressed in terms of the detailed product-form expressions as
\begin{equation}\label{eq:summation}
\pr{Q = q} = \sum\limits_{\boldsymbol{c}\in\mathcal{E}^q} \pi(\boldsymbol{c}),
\end{equation}
with $q\ge 0$.
Hence, all $|\mathcal{E}|^q$ states $\boldsymbol{c} = (c_1,c_2,\dots,c_q)$ with $c_i \in \mathcal{E}$ for all $i=1,\dots,q$ must be aggregated to determine $\pr{Q = q}$. Besides the fact that there are exponentially many terms, the various terms are also highly different. The difference between two terms is caused by the various job types that could occur but mainly by the order in which they appear. 

However, for small systems we can enumerate all possible server rate sequences $(|c_1|,|c_1\cup c_2|,\dots,|c_1\cup\dots\cup c_q|)$, which results in stationary distributions with particular underlying structures amenable for comparison as we will show in Subsection~\ref{subsec:intro_example}. Unfortunately, this enumeration strategy does not lead to tractable expressions for larger systems. Therefore we consider larger systems in a light-traffic regime to partially suppress the complexity (captured in the normalization constant), and reveal the essential dependence of the stationary distribution on the edge selection probabilities. 
In particular, the stationary probability in~\eqref{eq:summation} reduces in a light-traffic regime to a polynomial of degree $q$ in function of the selection probabilities. This again allows to compare systems with different underlying structures as will be demonstrated in Subsection~\ref{sec:main_results}.

\subsection{Four-server systems}\label{subsec:intro_example}
We will derive closed-form expressions for the summation in~\eqref{eq:summation} for small systems, for which the computations are already quite tedious. 
The focus will be on the classical power-of-two policy and a particular subset of weighted policies, namely those policies governed by ring graphs.
Let $\epsilon\in(0,1)$ and $N$ be even, and define the edge selection probabilities as
\begin{equation}\label{eq:hetro_ring_prob}
p_{\{i,i+1\}} = \left\{
\begin{array}{lcl}
\epsilon\frac{2}{N}, & & \text{if $i$ is even}\\
(1-\epsilon)\frac{2}{N}, & & \text{if $i$ is odd}
\end{array}
\right.
\end{equation}
with $i=1,\dots N$ and $p_{\{1,N\}} = p_{\{N,N+1\}}$. Note that 
the example in Figure~\ref{fig:example1} is a special case of this setting with $N=4$ and $\epsilon = 3/4$.
When $\epsilon = 1/2$, all probabilities are equal to $N^{-1}$. The average arrival rate across all edges is given by $\lambda$ and therefore this graph is referred to as the homogeneous ring. In all other cases we refer to this underlying graph as the heterogeneous ring.

\begin{lemma}\label{lem:uniform complete graph}
The stationary distribution of the total number of jobs in a system with the uniform complete graph structure on $N=4$~servers is given by 
\begin{equation}
\mathbb{P}\{Q^*_4 = q\}  =  \frac{1}{9}\left(1-\rho\right)\left(3-\rho\right)\left(3-2\rho\right)\left\{-4 \left(\frac{2\rho}{3}\right)^q + \frac{1}{2}\left(\frac{\rho}{3}\right)^q + \frac{9}{2}\rho^q \right\},
\end{equation}
with $q\ge 0$ and $\rho \coloneqq \frac{\lambda}{\mu}<1$.
\end{lemma}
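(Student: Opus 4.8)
The plan is to compute the sum in~\eqref{eq:summation} for the uniform complete graph on $N=4$ servers by grouping states according to their \emph{service-rate sequence} $(|c_1|, |c_1\cup c_2|, \dots, |c_1\cup\dots\cup c_q|)$. For the classical policy every edge has weight $p_e \equiv 1/\binom{4}{2} = 1/6$, so by~\eqref{eq:statdistr} the stationary probability of a state $\boldsymbol{c}=(c_1,\dots,c_q)$ is $C\,(4\lambda)^q (1/6)^q / \prod_{i=1}^q \mu(c_1,\dots,c_i)$, which depends on $\boldsymbol{c}$ \emph{only} through the sequence of aggregate service rates $\mu(c_1,\dots,c_i) = \mu\,|\cup_{j\le i}\{c_j\}|$. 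Writing $a_i \coloneqq |c_1\cup\dots\cup c_i| \in \{2,3,4\}$, every state thus contributes a weight $C\,(4\lambda/\mu)^q 6^{-q} \prod_{i=1}^q a_i^{-1}$, and the whole problem reduces to counting, for each admissible nondecreasing sequence $(a_1,\dots,a_q)$, how many states $\boldsymbol{c}$ realize it.

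First I would set up this combinatorial bookkeeping. Since $N=4$, we have $a_1=2$ always, and the partial union grows from $2$ toward a maximum of $4$; the key data is how many edges $c_i$ keep the union at its current size $k$ versus increase it. Concretely, once the union of endpoints has size $k$, an incoming edge either lies within the current $k$ vertices (keeping size $k$) or introduces a new vertex (raising the size). I would track the transitions $2\to3\to4$ (and staying at each level) and count, at each step, the number of edges of $\mathcal{E}$ consistent with the required vertex-set behavior. This yields, for fixed $q$, a sum over the number of steps spent at service rate $2$, at rate $3$, and at rate $4$, each weighted by the appropriate edge-count multiplicities and by the factor $\prod a_i^{-1} = 2^{-(\#\text{rate-}2)} 3^{-(\#\text{rate-}3)} 4^{-(\#\text{rate-}4)}$.

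Carrying out this triple sum, I expect the counting to decouple into geometric-type sums: the number of ways to have $j$ consecutive ``stay'' steps at a given level, times the transition multiplicity to the next level, produces sums like $\sum_j (\text{rate}/ \text{denominator})^j$ that collapse into closed form. After summing over all compositions of $q$ into the three phases, the result should organize into a linear combination of the three geometric terms $(2\rho/3)^q$, $(\rho/3)^q$, and $\rho^q$ with $\rho=\lambda/\mu$ --- exactly the three ratios $\tfrac{4\lambda/\mu}{6}\cdot\tfrac{1}{k}$ that arise when the union has stabilized at each possible size (noting $4\lambda/(6\cdot 2)=\rho/3$, $4\lambda/(6\cdot 3)=2\rho/9$, $4\lambda/(6\cdot 4)=\rho/6$, so the dominant geometric rates are governed by the eventual absorbing service rate). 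Finally, I would fix the overall constant by determining $C$ from the normalization $\sum_{q\ge0}\mathbb{P}\{Q_4^*=q\}=1$ (a sum of three geometric series), and verify the stated prefactor $\tfrac{1}{9}(1-\rho)(3-\rho)(3-2\rho)$ together with the coefficients $-4,\tfrac12,\tfrac92$ by cross-checking the $q=0$ and $q=1$ cases against direct evaluation of~\eqref{eq:statdistr}.

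The main obstacle is the transition counting: because edges share vertices, the number of edges that keep the union at size $k$ versus those that enlarge it must be computed carefully from the complete-graph structure on four vertices (for instance, at union-size $3$ there are $\binom{3}{2}=3$ internal edges but the count of ``growth'' edges depends on which vertex set is occupied). Getting these multiplicities right --- and correctly handling the fact that a single step may jump the union size, so the phases need not be contiguous in a naive way --- is where the bookkeeping is delicate. Once the multiplicities are pinned down, assembling the geometric sums and matching coefficients is routine algebra.
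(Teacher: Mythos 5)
Your plan is exactly the paper's proof: enumerate the server-rate sequences $(2,\dots,2)$, $(2,\dots,2,3,\dots,3)$, $(2,\dots,2,4,\dots,4)$ and $(2,\dots,2,3,\dots,3,4,\dots,4)$, count the states realizing each ($6$, $6\cdot4\cdot3^{q_1-1}$, $6^{q_1}$ and $4\cdot3^{q_1}6^{q_2}$, respectively), sum the resulting geometric series over the phase lengths, and fix $C$ by normalization. One small correction to your sanity-check aside: the stabilized geometric base at union-size $k$ is $\tfrac{4\lambda}{\mu}\cdot\tfrac{\binom{k}{2}}{6}\cdot\tfrac{1}{k}$ (i.e.\ $\rho/3$, $2\rho/3$, $\rho$ for $k=2,3,4$), not $\tfrac{4\lambda/\mu}{6k}$, because all $\binom{k}{2}$ internal edges keep the union at size $k$ and must be counted.
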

\begin{lemma}\label{lem:heterogen_ring4}
The stationary distribution of the total number of jobs in a system with the heterogeneous ring structure on $N=4$~servers is given by 
\begin{equation}\label{eq:stat_ring_4}
\begin{array}{rcl}
\mathbb{P}\{Q^{\mathrm{het}}_4(\epsilon) = q\} & = & \frac{\left(1-\rho\right)\left(1-(1-\epsilon)\rho\right)\left(1-\epsilon\rho\right)\left(3-2\rho\right)}{3-2\rho+ (1-\epsilon)\epsilon \rho^2} \cdot \\

& & \left\{
\frac{6\epsilon(1-\epsilon)}{2-9\epsilon(1-\epsilon)}\left(\frac{2\rho}{3}\right)^q + \frac{(1-\epsilon)^2}{\epsilon(2-3(1-\epsilon))}\left((1-\epsilon)\rho\right)^q  \right.\\
& & \left. + \frac{\epsilon^2}{(1-\epsilon)(2-3\epsilon)}\left(\epsilon\rho\right)^q  + \frac{1+\epsilon(1-\epsilon)}{\epsilon(1-\epsilon)}\rho^q \right\},
\end{array}
\end{equation}
with $q\ge 0$, $\epsilon \in (0,1)$ and $\rho \coloneqq \frac{\lambda}{\mu}<1$.
\end{lemma}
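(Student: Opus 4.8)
The plan is to evaluate the sum \eqref{eq:summation} directly from the product form \eqref{eq:statdistr}, using that on the four-cycle the only feature of a state $\boldsymbol c=(c_1,\dots,c_q)$ entering $\pi(\boldsymbol c)$ is the nested sequence of covered-server sets $S_i\coloneqq c_1\cup\cdots\cup c_i$, through their sizes $n_i=|S_i|\in\{2,3,4\}$. Writing $\rho=\lambda/\mu$ and invoking \eqref{eq:mu}, each state contributes $C\prod_{i=1}^q 4\rho\,p_{c_i}/n_i$, so $\pr{Q_4^{\mathrm{het}}(\epsilon)=q}$ equals $C$ times the total weight of all length-$q$ walks in the weighted directed graph on the reachable coverage sets --- the empty set, the four ring edges $\{1,2\},\{2,3\},\{3,4\},\{4,1\}$ of size~$2$, the four triples of size~$3$, and the full set of size~$4$ --- where a step from $S$ to $S\cup e$ carries weight $4\rho\,p_e/|S\cup e|$. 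This is the organized version of the ``server rate sequence'' enumeration alluded to earlier. Collecting these weights into a transfer matrix $M$, and letting $\boldsymbol 1$ be the all-ones vector and $e_\emptyset$ the indicator of the empty set, one has $\pr{Q_4^{\mathrm{het}}(\epsilon)=q}=C\,\boldsymbol 1^{\top}M^qe_\emptyset$.

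Ordering the coverage sets by size makes $M$ triangular, since a step can only enlarge or preserve $S$; its eigenvalues are therefore the diagonal (self-loop) weights $|S|^{-1}\sum_{e\subseteq S}4\rho\,p_e$. A short computation yields exactly four distinct values: $\rho$ for the full set; $2\rho/3$ for each triple (each containing two adjacent edges of total weight $\tfrac12$); $(1-\epsilon)\rho$ for the two ``odd'' edges $\{1,2\},\{3,4\}$; and $\epsilon\rho$ for the two ``even'' edges $\{2,3\},\{4,1\}$. Crucially, the size-$2$ and size-$3$ levels carry no internal transitions (no single edge turns one triple into another, or one edge-state into a different one), so the diagonal blocks of $M$ are genuinely diagonal and $M$ is diagonalizable for all but finitely many $\epsilon$, namely those where eigenvalues from different levels collide; the general case then follows since both sides of \eqref{eq:stat_ring_4} are rational in $\epsilon$ and agree on a cofinite set. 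Consequently $\boldsymbol 1^{\top}M^qe_\emptyset$ is a pure linear combination of the geometric sequences $\rho^q,(2\rho/3)^q,((1-\epsilon)\rho)^q,(\epsilon\rho)^q$ for $q\ge1$, with no polynomial-in-$q$ factors; moreover the all-ones functional annihilates the null eigenvector attached to the empty set, so the same form persists at $q=0$. This already pins down the functional shape of \eqref{eq:stat_ring_4}.

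It remains to determine the four coefficients and the constant $C$. The coefficients follow either from the spectral projections of $M$ --- the right eigenvector for $\rho$ is simply the indicator of the full set, and the remaining eigenvectors are obtained by back-substitution in the triangular system --- or, more economically, by computing $\boldsymbol 1^{\top}M^qe_\emptyset$ for $q=1,2,3,4$ by hand and solving the resulting Vandermonde system in the four unknowns. Finally $C$ is fixed by normalization $\sum_{q\ge0}\pr{Q_4^{\mathrm{het}}(\epsilon)=q}=1$ through $\sum_{q\ge0}r^q=(1-r)^{-1}$; the denominators $1-\rho$, $1-(1-\epsilon)\rho$, $1-\epsilon\rho$ and $1-\tfrac23\rho=\tfrac13(3-2\rho)$ reassemble into the prefactor in \eqref{eq:stat_ring_4}. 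I expect the main obstacle to be purely computational: correctly aggregating the contributions of states that share a rate (four triples at $2\rho/3$, two edges each at $(1-\epsilon)\rho$ and $\epsilon\rho$) into the single coefficients displayed, and then pushing the normalization algebra through without error. The derivation parallels that of Lemma~\ref{lem:uniform complete graph}, the reduced symmetry of the ring being the only extra source of bookkeeping.
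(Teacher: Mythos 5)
Your proposal is correct, and at its core it performs the same aggregation as the paper: both reduce the sum \eqref{eq:summation} to a sum over trajectories of the covered-server set $S_i=c_1\cup\dots\cup c_i$, which is exactly the ``server rate sequence'' enumeration the paper carries out for Lemmas~\ref{lem:uniform complete graph} and~\ref{lem:heterogen_ring4}. The difference is in execution. The paper writes out the quadruple sum over the block lengths of the sequences $(2,\dots,2,3,\dots,3,4,\dots,4)$, splitting according to whether the first edge has weight $\epsilon/2$ or $(1-\epsilon)/2$, and then sums the geometric series explicitly. You instead package the same data into a triangular transfer matrix on the ten reachable coverage sets, read off the spectrum $\{0,\ (1-\epsilon)\rho,\ \epsilon\rho,\ 2\rho/3,\ \rho\}$ from the diagonal, and conclude that $\pr{Q=q}$ must be a linear combination of the four nonzero geometric sequences, deferring the coefficients to a Vandermonde interpolation at $q=1,\dots,4$ and the constant $C$ to normalization. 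This buys a structural explanation for the shape of \eqref{eq:stat_ring_4} (including why the denominators $2-9\epsilon(1-\epsilon)$, $2-3(1-\epsilon)$, $2-3\epsilon$ blow up precisely at the eigenvalue collisions $\epsilon=1/3,2/3$, which you correctly handle by rationality in $\epsilon$), at the cost of pushing the same arithmetic into the interpolation step. Two small points to tighten: the claim that the formula persists at $q=0$ because $\boldsymbol 1^{\top}$ annihilates the kernel component of $e_\emptyset$ is true but not automatic --- it is equivalent to the bracket in \eqref{eq:stat_ring_4} summing to $1$ at $q=0$, which should be verified (it holds, and is anyway forced once $C$ is identified with $\pr{Q=0}$); and diagonalizability requires not only that the within-level blocks be diagonal but that eigenvalues at different levels not coincide, which is exactly the cofinite-$\epsilon$ caveat you already state. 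Neither is a gap in substance.
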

Note that the stationary distribution~\eqref{eq:stat_ring_4} is symmetric around $\epsilon = 1/2$, reflecting the symmetry in the edge selection probabilities. The derivations of the results in Lemmas~\ref{lem:uniform complete graph} and~\ref{lem:heterogen_ring4} are deferred to Appendix~\ref{app:coc_derivation}.

The next proposition proves a partial version of Conjecture~\ref{conj:dom_red} for systems with $N=4$ servers and weighted policies that correspond to homogeneous ring graphs.

\begin{proposition}\label{prop:stoch_dom}
Let $Q^*_4$ and $Q^{\mathrm{hom}}_4$ denote the total number of jobs in stationarity in a system with a uniform complete graph structure and a homogeneous ring, respectively, with $N=4$ servers and $\lambda<\mu$. Then, $Q^*_4$ is stochastically smaller than $Q^{\mathrm{hom}}_4$, i.e., 
\begin{equation} \label{eq:stoch_dom_result}
Q^*_4 \le_{\mathrm{st}} Q^{\mathrm{hom}}_4.
\end{equation}
\end{proposition}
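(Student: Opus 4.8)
The plan is to prove the statement by a direct comparison of the two closed-form laws, reducing the stochastic ordering to a sign condition on a difference of geometric mixtures.

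First I would specialize Lemma~\ref{lem:heterogen_ring4} at $\epsilon=1/2$ to obtain the homogeneous-ring law. Here the prefactor simplifies considerably: since $(1-\rho/2)^2=\tfrac14(2-\rho)^2$ and $3-2\rho+\tfrac14\rho^2=\tfrac14(2-\rho)(6-\rho)$, it collapses to $A^{\mathrm{hom}}:=\frac{(1-\rho)(2-\rho)(3-2\rho)}{6-\rho}$, while the three bracketed coefficients become $-6$, $2$ and $5$ on the rates $\tfrac{2\rho}{3}$, $\tfrac{\rho}{2}$ and $\rho$. Writing $A^{*}:=\tfrac19(1-\rho)(3-\rho)(3-2\rho)$ for the complete-graph prefactor of Lemma~\ref{lem:uniform complete graph}, both $\mathbb{P}\{Q^*_4=q\}$ and $\mathbb{P}\{Q^{\mathrm{hom}}_4=q\}$ are then (signed) combinations of the four geometric sequences $(\rho/3)^q$, $(\rho/2)^q$, $(2\rho/3)^q$ and $\rho^q$.

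Next I would invoke the characterization $Q^*_4\le_{\mathrm{st}}Q^{\mathrm{hom}}_4 \iff \mathbb{P}\{Q^*_4>q\}\le\mathbb{P}\{Q^{\mathrm{hom}}_4>q\}$ for every $q\ge0$, i.e.\ $F^{*}(q)\ge F^{\mathrm{hom}}(q)$ for the two cumulative distribution functions. Because both laws sum to one, a convenient sufficient condition is that the difference $g(q):=\mathbb{P}\{Q^*_4=q\}-\mathbb{P}\{Q^{\mathrm{hom}}_4=q\}$ changes sign exactly once, from positive to negative: the partial sums of $g$ then start nonnegative, rise while $g>0$, and decrease back to $0$, so $F^{*}-F^{\mathrm{hom}}\ge0$ throughout. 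I would therefore write $g(q)=c_1(2\rho/3)^q+c_2(\rho/3)^q+c_3(\rho/2)^q+c_4\rho^q$ and determine the signs of the four $\rho$-dependent coefficients. A short computation gives $c_2>0$ and $c_3<0$ (the complete- and ring-only rates) and $c_1>0$, and, for the dominant rate, $c_4=\tfrac92A^{*}-5A^{\mathrm{hom}}=-\frac{(1-\rho)^2(3-2\rho)(2+\rho)}{2(6-\rho)}<0$; likewise $g(0)=A^{*}-A^{\mathrm{hom}}=\frac{\rho^2(1-\rho)(3-2\rho)}{9(6-\rho)}>0$. Thus $g$ is positive at $0$ and eventually negative, consistent with the asserted ordering.

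The hard part will be upgrading these endpoint signs to a genuine single crossing, uniformly in $\rho\in(0,1)$. Ordered by increasing rate the coefficients alternate as $(+,-,+,-)$, so a Descartes/Tchebycheff-type bound on sign changes of exponential sums only guarantees \emph{at most three} sign changes of $g$, and one must rule out the two spurious ones. My plan for this step is to factor out the leading rate and study $\psi(q):=\rho^{-q}g(q)=c_4+c_1(2/3)^q+c_3(1/2)^q+c_2(1/3)^q$ as a function of real $q\ge0$, where $\psi(0)=g(0)>0$ and $\psi(q)\to c_4<0$; I would then show that $\psi$ has a unique zero, either by bounding the zeros of $\psi$ and $\psi'$ via the variation-diminishing property and fixing the parity from the end behavior, or by a direct monotonicity/convexity estimate once the dominant decaying term $c_1(2/3)^q$ is isolated. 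The single sign change of $g$ yields $F^{*}(q)\ge F^{\mathrm{hom}}(q)$ for all $q$ and hence $Q^*_4\le_{\mathrm{st}}Q^{\mathrm{hom}}_4$. I expect this uniform control of the two middle terms to be the crux, since the four distinct decay rates preclude any one-line comparison.
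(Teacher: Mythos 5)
Your setup is correct as far as it goes: the specialization of Lemma~\ref{lem:heterogen_ring4} at $\epsilon=1/2$, the prefactor simplification to $(1-\rho)(2-\rho)(3-2\rho)/(6-\rho)$, the coefficient signs $c_2>0$, $c_3<0$, $c_1>0$, and the closed forms for $c_4$ and $g(0)$ all check out, and the single-crossing criterion for the probability mass functions would indeed imply the claimed ordering of the distribution functions. But the proof has a genuine gap exactly where you flag it: the unique zero of $\psi(q)=c_4+c_1(2/3)^q+c_3(1/2)^q+c_2(1/3)^q$ is the entire content of the argument, and you only offer a plan for it. Worse, the specific fallbacks you propose do not work as stated: $\psi$ is \emph{not} monotone (e.g.\ at $\rho=0.8$ one has $\psi(1)>\psi(0)>0$) and not convex near $q=0$ for small $\rho$ (the second derivative of the exponential part has a negative middle term that dominates there), while the variation-diminishing bound combined with the end-behavior parity only narrows the count to one \emph{or three} zeros. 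One could try to salvage this by showing $\psi'$ itself has a single sign change (its three coefficients alternate $-,+,-$ in decreasing rate, giving at most two zeros, and the right parity when $\psi'(0)>0$), but that requires a further uniform-in-$\rho$ estimate you have not supplied, and the case $\psi'(0)\le 0$ would need separate treatment. Note also that single crossing of the mass functions is strictly stronger than the stochastic ordering you want, so you are committing to proving more than necessary.

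The paper avoids this difficulty entirely by comparing the \emph{tail} probabilities rather than the mass functions: summing the geometric series gives $\mathbb{P}\{Q^*_4\ge q\}$ and $\mathbb{P}\{Q^{\mathrm{hom}}_4\ge q\}$ as mixtures of only three geometric terms each, and after normalizing the difference by $\rho^q$ (and clearing denominators) one obtains a function $g_\lambda(q)$ consisting of a \emph{positive constant} plus three decaying exponentials. It then suffices to check $g_\lambda(1)>0$ and that the first difference $g_\lambda(q+1)-g_\lambda(q)$ is positive, which reduces to the elementary bound $-18(1/2)^q+5(1/3)^q+12(2/3)^q>0$ for $q\ge 2$ after crude estimates on the $\rho$-dependent coefficients. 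I would recommend you restructure your argument along these lines: working with tails converts the delicate single-crossing question into a monotonicity statement for an explicit exponential sum that can be verified term by term.
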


The proof of Proposition~\ref{prop:stoch_dom} uses Lemmas~\ref{lem:uniform complete graph} and~\ref{lem:heterogen_ring4} and can be found in Appendix~\ref{app:stoch_dom_proof}. A numerical comparison of the above derived stationary distributions is depicted in Figure~\ref{fig:stoch_dom}. Although the figure clearly supports the result in Proposition~\ref{prop:stoch_dom}, it also suggests that the absolute differences between the various distributions are fairly small.
Furthermore, the above-described settings are compared to a setting where the ring structure is disconnected by choosing $\epsilon=0$ or $\epsilon=1$. This system is equivalent to two independent single-server queues with arrival rate~$2\lambda$ and service rate~$2\mu$. Hence, the total number of jobs in the system is determined by a sum of two independent and geometrically distributed random variables with parameter $\rho\coloneqq\lambda/\mu$, resulting in a negative binomial distribution. Alternatively, it can be seen that~\eqref{eq:stat_ring_4} indeed converges to $(q+1)(1-\rho)^2\rho^q$ when $\epsilon\downarrow 0$ or $\epsilon\uparrow 1$.

Moreover, from~\eqref{eq:stat_ring_4} it can be deduced that the probability of an empty system, i.e., $\mathbb{P}\{Q^{\mathrm{het}}_4(\epsilon) = 0\}$, decreases the more $\epsilon$ deviates from $1/2$ (Appendix~\ref{app:coc_derivation}). While considering Figure~\ref{fig:stoch_dom}, it can be observed that $\mathbb{P}\{Q^{\mathrm{het}}_4(\epsilon)\ge q\}$ increases for any fixed $q$ the more $\epsilon$ deviates from $1/2$, revealing a degradation of the system performance the more the selection probabilities of the ring structure differ from the uniform probabilities. 
 
In addition, a setting where a job can be replicated to all $N=4$~servers is considered, so $d=4$ instead of $d=2$. This fully pooled scenario is equivalent in performance to a single-server queue with arrival rate~$4\lambda$ and service rate~$4\mu$, and yields the stochastically smallest total number of jobs in the system. Indeed, the service rate in this system is always equal to~$4\mu$ when there are jobs present, while in all other cases the service rate is at most equal to~$4\mu$.

\begin{figure}[h]
\centering
\includegraphics{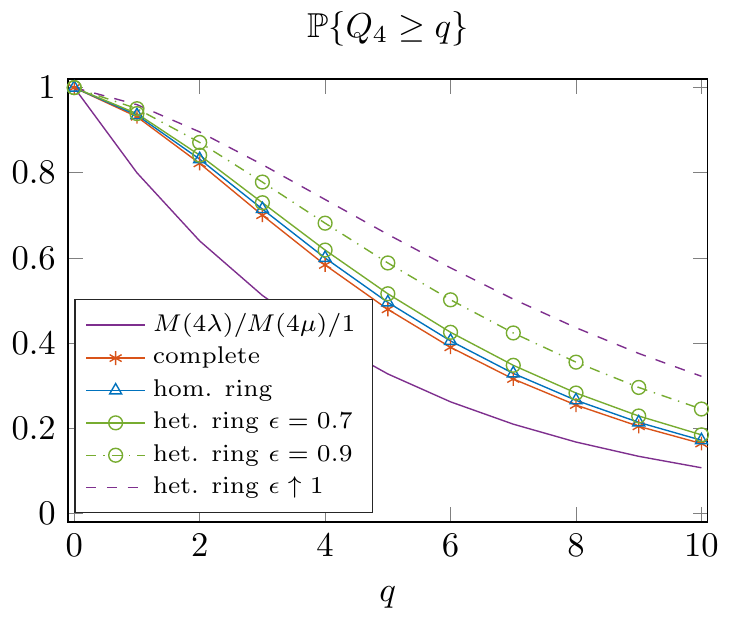}
\caption{The stationary distributions of the total number of jobs compared for various power-of-$d$ policies with $N=4$ servers and  $\rho = 0.8$: 
classical power-of-4 policy, classical power-of-2 policy, weighted power-of-2 policies governed by a homogeneous ring and heterogeneous rings with $\epsilon=0.7$, $\epsilon = 0.9$ and $\epsilon\uparrow 1$.} \label{fig:stoch_dom}
\end{figure}

The challenges with the detailed product-form expressions mentioned at the beginning of this section could be overcome by judicious state aggregation for scenarios with a small number of servers and weighted policies that correspond to ring graphs, although the expressions were already rather unwieldy in this case.
The stationary distributions (and their normalization constants) in Lemmas~\ref{lem:uniform complete graph} and~\ref{lem:heterogen_ring4} for larger values of~$N$ become more intricate, let alone their comparison for different edge selection probabilities. This makes it complicated to extend Proposition~\ref{prop:stoch_dom} for larger system sizes, though it is assumed the hold for any fixed~$N$ as illustrated by means of simulation in Appendix~\ref{app:simulation}. Alternatively, the stationary distribution of the total number of jobs in a system with $N$~servers operating under the classical power-of-$d$ policy could be derived using a similar approach as outlined in~\cite[Section~4.2.2]{gardner2017redundancy}, implicitly relying on the corresponding generating function. For any $q\ge 1$, 
\begin{equation}
\mathbb{P}\{Q^*_N = q\} 
=
C \frac{\rho^q}{\binom{N-1}{d-1}^q} \sum_{\boldsymbol{n}\in R(q)} (-1)^{|\boldsymbol{n}|+q}(|\boldsymbol{n}|)!
\prod_{j=1}^q \frac{F_j(N,d)^{n_j}}{n_j!},
\end{equation}
In the above formula, we define $
R(q) \coloneqq \{\boldsymbol{n}\in\mathbb{N}^q \colon 1\cdot n_1+2\cdot n_2+\dots +q\cdot n_q = q \},$
$|\boldsymbol{n}| \coloneqq n_1+n_2+\dots+n_q$
and $C$ as in~\cite[Theorem~2]{gardner2017redundancy}. Moreover, 
\begin{equation}
F_j(N,d) =  \sum_{\boldsymbol{k}\in\tilde{R}_j(N,d)} \prod_{i=d}^N \binom{i-1}{d-1}^{k_{i-d+1}},
\end{equation}
where $\tilde{R}_j(N,d)\subset\{0,1\}^{N-d+1}$ contains all binary vectors with precisely $j$ entries equal to~1.
Although the above expressions are presented in closed form, they are unwieldy. This would be exacerbated in case the occupancy probabilities of the weighted power-of-two policy would be derived using the explicit expression for the generating function obtained in~\cite[Proposition~1]{Cardinaels2022}, hence a general comparison between the stationary distributions seems out of reach.

Coupling arguments are commonly used as an alternative method to establish stochastic dominance properties, when the actual distributions are not tractable. However, this approach seems out of reach because the systems under consideration do not necessarily have the same number of job types or equal arrival probabilities for mutual job types. 
Also, coupling arguments would yield a stronger stochastic comparison result for the entire process over time, which may in fact not hold. This implies that a coupling approach might just be doomed to fail regardless.

\subsection{Light-traffic results}\label{sec:main_results}
As discussed in the above paragraph, stochastic dominance results in full generality do not seem within reach. As we now proceed to demonstrate however, this degree of generality can be tackled if we consider a light-traffic regime.

Let $Q_{\lambda}(P)$ be a random variable with the stationary distribution of the total number of jobs in the system with an underlying graph structure with edge selection probabilities $P=(p_{\{i,j\}})_{i,j}$. With $C$ the normalization constant in~\eqref{eq:statdistr} equal to $\mathbb{P}\{Q_{\lambda}(P) = 0 \}$ it can easily be seen that 
\begin{equation}\label{eq:general_prob}
\mathbb{P}\{Q_{\lambda}(P) = q \} = \mathbb{P}\{Q_{\lambda}(P) = 0 \}\cdot \alpha_q(P) \cdot \left(\frac{N\lambda}{\mu}\right)^q,
\end{equation}
with $q\ge 1$. Define
\begin{equation}\label{eq:alpha_k}
 \alpha_q(P) \coloneqq \sum\limits_{\boldsymbol{c}\in\mathcal{E}^q}\prod\limits_{i=1}^q\frac{p_{c_i}}{|\bigcup\limits_{j=1}^i \{c_j\}|},
\end{equation}
which only depends on the edge selection probabilities and not on the total arrival rate~$N\lambda$ or the server speed~$\mu$. Obviously, the probability of an empty system, $\mathbb{P}\{Q_{\lambda}(P) = 0 \}$, tends to one when $\lambda$ approaches zero. 
 Note that $\alpha_1(P)$ is always equal to $1/2$ since $|\{c\}| = 2$ for all $c\in\mathcal{E}$. The value $\alpha_q(P)$ for any $q\ge 2$ depends on the edge selection probabilities, for instance, $|\{c_1\}\cup\{c_2\}|$ is equal to $2$, $3$ or $4$ whenever the edges $c_1$ and $c_2$ are either the same, have one common endpoint, or no common endpoints, respectively. Therefore, $\alpha_q(P)$ will determine how various underlying structures will perform compared to each other when $\lambda$ approaches zero as formalized in the following theorem.

\begin{theorem}\label{th:light_traffic}
For any $q\ge 1$, if $P' = (p'_{\{i,j\}})_{i,j}$ and $P = (p_{\{i,j\}})_{i,j}$ are two sets of edge selection probabilities such that $\alpha_q(P')\le \alpha_q(P)$, then 
\begin{equation}
  \lim_{\lambda\downarrow 0} \frac{\mathbb{P}\{Q_{\lambda}(P')\ge q\}}{\mathbb{P}\{Q_{\lambda}(P)\ge q\}} \le 1.
\end{equation}
\end{theorem}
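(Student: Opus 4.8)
The plan is to work directly with the representation in~\eqref{eq:general_prob}, which turns the tail probability into a power series in the natural small parameter $x \coloneqq N\lambda/\mu$. First I would write, for $q\ge 1$,
\[
\pr{Q_\lambda(P)\ge q} = \sum_{k\ge q}\pr{Q_\lambda(P)=k} = \pr{Q_\lambda(P)=0}\sum_{k\ge q}\alpha_k(P)\,x^k,
\]
using that every term in the sum has $k\ge 1$ so that~\eqref{eq:general_prob} applies verbatim. Factoring out the lowest-order monomial gives $\pr{Q_\lambda(P)\ge q} = \pr{Q_\lambda(P)=0}\,x^q\sum_{m\ge 0}\alpha_{q+m}(P)\,x^m$, and likewise for $P'$; the common factor $x^q$ cancels in the ratio.

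Next I would form the ratio of the two tail probabilities and let $\lambda\downarrow 0$, i.e.\ $x\downarrow 0$. The ratio factors as the product of $\pr{Q_\lambda(P')=0}/\pr{Q_\lambda(P)=0}$ and $\bigl(\sum_{m\ge 0}\alpha_{q+m}(P')\,x^m\bigr)\big/\bigl(\sum_{m\ge 0}\alpha_{q+m}(P)\,x^m\bigr)$. For the first factor, since both empty-system probabilities tend to $1$ as $\lambda\downarrow 0$, it converges to $1$. For the second factor, evaluating the two convergent power series at $x=0$ retains only their constant terms, so the limit equals $\alpha_q(P')/\alpha_q(P)$. Combining, $\lim_{\lambda\downarrow 0}\pr{Q_\lambda(P')\ge q}/\pr{Q_\lambda(P)\ge q} = \alpha_q(P')/\alpha_q(P)$, which by the hypothesis $\alpha_q(P')\le\alpha_q(P)$ is at most $1$.

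Two points require care before this computation is legitimate. First, the denominator must be nonzero: $\alpha_q(P)>0$ holds because the constant sequence $\boldsymbol{c}=(e,\dots,e)$ for any single edge $e\in\mathcal{E}$ contributes the strictly positive term $(p_e/2)^q$ to the sum in~\eqref{eq:alpha_k}, and $\mathcal{E}\neq\emptyset$ since the $p_e$ sum to one. Second, and this is the main obstacle, I must justify interchanging the limit $x\downarrow 0$ with the infinite summation, i.e.\ that $\sum_{m\ge 0}\alpha_{q+m}(P)\,x^m$ is a genuinely convergent power series whose value at $x=0$ is its constant term $\alpha_q(P)$; equivalently, that it has a positive radius of convergence. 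The cleanest route is to note that $\sum_{k\ge 0}\alpha_k(P)\,x^k = 1/\pr{Q_\lambda(P)=0}$, which is finite exactly when the occupancy chain is positive recurrent. By the stability conditions recalled in Remark~\ref{remark:stability}, all of which are strict inequalities that hold at $\lambda=0$, this is the case for every $\lambda$ below a positive threshold, so each series converges on a nondegenerate interval around $x=0$ and defines a function continuous at $x=0$. Once this convergence is secured, the leading-order asymptotics are immediate and the stated inequality follows.
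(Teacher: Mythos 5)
Your proof is correct and follows essentially the same route as the paper's, which establishes Theorem~\ref{th:light_traffic} via Lemma~\ref{lem:prelimit} by extracting the leading coefficient $\alpha_q(\cdot)$ of $\mathbb{P}\{Q_\lambda(\cdot)\ge q\}$ as a power series in $N\lambda/\mu$ and using that the normalization constant tends to one as $\lambda\downarrow 0$. You additionally make explicit two points the paper leaves implicit --- the strict positivity of $\alpha_q(P)$ and the positive radius of convergence of the series justifying the interchange of limit and summation --- which only strengthens the argument.
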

The following lemma allows us to establish Theorem~\ref{th:light_traffic}.

\begin{lemma}\label{lem:prelimit}
Let $P' = (p'_{\{i,j\}})_{i,j}$ and $P = (p_{\{i,j\}})_{i,j}$ be two sets of edge selection probabilities, then for any $q\ge 0$
\begin{equation}\label{eq:prelimit}
\frac{\mathbb{P}\{Q_{\lambda}(P')\ge q\}}{\mathbb{P}\{Q_{\lambda}(P)\ge q\}} = \frac{\alpha_q(P') +o(1)}{\alpha_q(P)+o(1)}
\end{equation}
as $\lambda\downarrow 0$.
\end{lemma}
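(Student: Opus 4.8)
The plan is to turn the tail probability $\mathbb{P}\{Q_\lambda(P)\ge q\}$ into a power series in the scaled load and isolate its leading term. Set $x \coloneqq N\lambda/\mu$, so that the light-traffic limit $\lambda\downarrow 0$ coincides with $x\downarrow 0$, and note first that the case $q=0$ is trivial since both tail probabilities equal $1$ while $\alpha_0(P)=\alpha_0(P')=1$. For $q\ge 1$ I would sum the per-level identity \eqref{eq:general_prob} over all levels $k\ge q$ to obtain
\[
\mathbb{P}\{Q_\lambda(P)\ge q\} = \mathbb{P}\{Q_\lambda(P)=0\}\sum_{k\ge q}\alpha_k(P)\,x^k = \mathbb{P}\{Q_\lambda(P)=0\}\,x^q\Bigl(\alpha_q(P)+\sum_{k>q}\alpha_k(P)\,x^{k-q}\Bigr),
\]
and likewise for $P'$. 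The common factor $x^q$ will cancel in the ratio, so only the bracketed factor matters.

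Next I would show that the residual tail $\sum_{k>q}\alpha_k(P)\,x^{k-q}=x\sum_{m\ge 0}\alpha_{q+1+m}(P)\,x^m$ is $o(1)$ as $x\downarrow 0$. The relevant fact is that the series $\sum_{k\ge 0}\alpha_k(P)\,x^k$ genuinely converges for small $x$: normalization forces $\mathbb{P}\{Q_\lambda(P)=0\}\sum_{k\ge 0}\alpha_k(P)\,x^k=1$, so the series equals $1/\mathbb{P}\{Q_\lambda(P)=0\}$ and has a positive radius of convergence. Being a convergent power series, $\sum_{m\ge 0}\alpha_{q+1+m}(P)\,x^m$ is continuous at $x=0$, hence bounded as $x\downarrow 0$, so the residual is indeed $O(x)=o(1)$; the same argument applies verbatim to $P'$.

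Forming the ratio and cancelling $x^q$ then gives
\[
\frac{\mathbb{P}\{Q_\lambda(P')\ge q\}}{\mathbb{P}\{Q_\lambda(P)\ge q\}} = \frac{\mathbb{P}\{Q_\lambda(P')=0\}}{\mathbb{P}\{Q_\lambda(P)=0\}}\cdot\frac{\alpha_q(P')+o(1)}{\alpha_q(P)+o(1)}.
\]
Since both empty-system probabilities tend to $1$ as $\lambda\downarrow 0$, each is $1+o(1)$, and the elementary identity $(1+o(1))(c+o(1))=c+o(1)$ (applied with the fixed constants $c=\alpha_q(P')$ and $c=\alpha_q(P)$) lets me absorb the prefactor into the remainders, yielding exactly \eqref{eq:prelimit}. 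I would also remark that the denominator never degenerates: every summand in \eqref{eq:alpha_k} is strictly positive because $p_{c_i}>0$ for $c_i\in\mathcal{E}$ and each cardinality is a positive integer, and $\mathcal{E}\neq\emptyset$, so $\alpha_q(P)>0$ for every $q$.

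The main obstacle is the uniform legitimacy of the $o(1)$ tail estimate as $\lambda\downarrow 0$, i.e.\ ensuring that $\sum_k\alpha_k(P)\,x^k$ is an honestly convergent object near $x=0$ with radius of convergence bounded away from $0$ for \emph{both} graphs at once, rather than a merely formal series. This is precisely where the stability conditions of Remark~\ref{remark:stability} enter: they guarantee a stationary distribution---and hence a convergent normalization series---for each fixed finite graph once $\lambda$ lies below its (positive) stability threshold. Taking $\lambda$ small enough to sit below the thresholds of both $P$ and $P'$ simultaneously makes the expansions and the resulting $O(x)$ bounds valid for the two systems in tandem, which is all the comparison requires.
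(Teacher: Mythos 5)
Your proof is correct and follows essentially the same route as the paper: both expand the tail probability via the product-form identity \eqref{eq:general_prob}, isolate the leading coefficient $\alpha_q(P)\left(N\lambda/\mu\right)^q$, and use that the normalization constant $\mathbb{P}\{Q_{\lambda}(P)=0\}$ tends to $1$ as $\lambda\downarrow 0$. If anything, your justification of the $o(1)$ remainder via the positive radius of convergence of the normalization series (guaranteed by the stability conditions of Remark~\ref{remark:stability} for small $\lambda$) is more explicit than the paper's terse Taylor-expansion argument.
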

\begin{proof}
The Taylor expansion of $\mathbb{P}\{Q_{\lambda}(P) = 0 \}$ near $\lambda$ equal to zero yields
\begin{equation}\label{eq:taylor}
\mathbb{P}\{Q_{\lambda}(P) = 0 \} = \sum_{k=0}^{\infty} \frac{1}{k!}\left(\frac{N\lambda}{\mu}\right)^q\dfrac{\mathrm{d}^k}{\mathrm{d}x^k} \mathbb{P}\{Q_{\lambda}(P) = 0 \}\vert_{\lambda\downarrow 0},
\end{equation}
with $x\coloneqq N\lambda/\mu$. Combining~\eqref{eq:taylor} with the observations in~\eqref{eq:general_prob} and~\eqref{eq:alpha_k} gives $\mathbb{P}\{Q_{\lambda}(P) = q \} =  \alpha_q(P)(N\lambda/\mu)^q + o(\lambda^q)$ from which~\eqref{eq:prelimit} follows.
\end{proof}
From Theorem~\ref{th:light_traffic} it can be deduced that Conjecture~\ref{conj:dom_red} holds in a light-traffic regime once we are able to establish an inequality relation for the $\alpha_q(P)$ values involved. 
More precisely, none of the weighted power-of-two policies achieves better performance than the classical power-of-two policy when it can be shown that the uniform edge selection probabilities yield the smallest values of $\alpha_q(P)$ for all $q\ge 1$. So, proving Conjecture~\ref{conj:dom_red} in a light-traffic regime boils down to an optimization problem in terms of $\alpha_q(P)$ as a function of the edge selection probabilities $P=(p_{\{i,j\}})_{i,j}$. 
Note that $\alpha_q(\cdot)$ is a multivariate polynomial of degree~$q$, hence it is continuous. Moreover, the set of edge selection probabilities is compact in $\mathbb{R}^E$, with $E=\binom{N}{2}$, implying that $\alpha_q(\cdot)$ must attain its global minimum.
With the above observations in mind, we now present the following conjecture.

\begin{conjecture}\label{conj:alpha}
Let $\alpha_q^*\coloneqq \alpha_q(P')$ with $P' = (p'_{\{i,j\}})_{i,j}$ such that $p'_{\{i,j\}}\equiv 1/\binom{N}{2}$ for all~$i$ and~$j$, $i\neq j$, then
\begin{equation}
\alpha_q^* = \min\left\{ \alpha_q(P) \mid P= \left(p_{\{i,j\}}\right)_{i,j} \right\},
\end{equation}
for all $q\ge 0$.
\end{conjecture}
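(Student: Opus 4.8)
The plan is to cast the statement as a constrained minimization of $\alpha_q$ over the probability simplex $\Delta \coloneqq \{P = (p_e)_{e\in\mathcal{E}} : p_e \ge 0,\ \sum_e p_e = 1\}$ and to exploit both a symmetry and a convexity property of $\alpha_q$. Existence of a minimizer is already guaranteed by continuity of the polynomial $\alpha_q$ and compactness of $\Delta$, as noted above. The starting point is the probabilistic reading of~\eqref{eq:alpha_k}: if $c_1,\dots,c_q$ are drawn independently from $P$ and $U_i \coloneqq |\bigcup_{j=1}^i \{c_j\}|$ is the number of distinct servers covered by the first $i$ edges, then $\alpha_q(P) = \mathbb{E}_P\big[\prod_{i=1}^q U_i^{-1}\big]$, so minimizing $\alpha_q$ amounts to making the covered-server process grow as quickly as possible. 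Crucially, the weight $\prod_i U_i^{-1}$ is invariant under relabelling servers, so $\alpha_q(\sigma\cdot P) = \alpha_q(P)$ for every vertex permutation $\sigma\in S_N$ acting on edges via $(\sigma\cdot P)_e = p_{\sigma^{-1}(e)}$; since $S_N$ acts transitively on the edges of the complete graph, the uniform $P'$ is the unique $S_N$-invariant point of $\Delta$.

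Given this symmetry, the argument would reduce to a single statement: \emph{$\alpha_q$ is convex on $\Delta$}. Indeed, if $P^\star$ is any minimizer, then averaging over the group, $\bar P \coloneqq |S_N|^{-1}\sum_{\sigma\in S_N}\sigma\cdot P^\star$, is $S_N$-invariant and hence equals the uniform $P'$, while Jensen's inequality together with the invariance gives $\alpha_q(P') = \alpha_q(\bar P) \le |S_N|^{-1}\sum_\sigma \alpha_q(\sigma\cdot P^\star) = \alpha_q(P^\star)$. Thus $P'$ is itself a minimizer, which is exactly the claim. The entire burden is therefore shifted onto establishing convexity of the degree-$q$ polynomial $\alpha_q$.

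For $q=2$ this can be carried out explicitly and is instructive. Here $\alpha_2(P) = \tfrac12\sum_{e,f} p_e p_f / |e\cup f|$ is the quadratic form of the symmetric matrix $M = (\tfrac12|e\cup f|^{-1})_{e,f}$, and convexity is equivalent to $M \succeq 0$. Writing $|e\cup f| = 4 - |e\cap f|$ and expanding $|e\cup f|^{-1} = \tfrac14\sum_{k\ge 0} 4^{-k}|e\cap f|^k$, it suffices that each entrywise power $(|e\cap f|^k)_{e,f}$ be positive semidefinite. But $|e\cap f| = \sum_{v} \mathbf{1}[v\in e]\,\mathbf{1}[v\in f]$ is the Gram matrix of the rows of the server--edge incidence matrix, hence positive semidefinite, so by the Schur product theorem all its Hadamard powers are as well. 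Therefore $M\succeq 0$, $\alpha_2$ is convex, and the symmetrization argument yields $\alpha_2^\star = \min_P \alpha_2(P)$; one checks on $K_4$ that, e.g., the perfect-matching measure gives $\alpha_2 = 3/16 = 27/144 > 25/144 = \alpha_2^\star$, as predicted.

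The main obstacle is to extend convexity to all $q\ge 3$. Unlike the quadratic case, $\alpha_q$ is a homogeneous polynomial of degree $q$ with nonnegative coefficients, and such polynomials need not be convex, so the Hessian has to be controlled using the specific structure of the weights $\prod_i U_i^{-1}$. A promising route is the integral representation $\prod_{i=1}^q U_i^{-1} = \int_{[0,1]^q}\prod_{i=1}^q t_i^{\,U_i-1}\,\mathrm{d}\boldsymbol t$, under which $\prod_i t_i^{U_i} = \prod_v \prod_{i\ge\tau_v} t_i$ with $\tau_v$ the first step covering server $v$; the hope is that, after integrating out $\boldsymbol t$, the result can be written as a nonnegative combination of Hadamard powers of incidence-type Gram matrices, generalizing the $q=2$ computation. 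An alternative is the recursion $\alpha_q(P) = \sum_e p_e\,|e|^{-1}\beta_{q-1}(P,e)$, where $\beta_k(P,S)$ is the analogous sum conditioned on an already-covered server set $S$, together with an induction on $q$; the difficulty there is that the conditioned quantities $\beta_k(\cdot,S)$ break the full $S_N$-symmetry once $S$ is fixed. Either way, proving convexity of $\alpha_q$ (or a suitable $S_N$-adapted majorization monotonicity) for general $q$ is the crux, and is precisely what keeps the statement at the level of a conjecture.
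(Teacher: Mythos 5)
This statement is an explicit \emph{conjecture} in the paper: there is no proof of it in the text, only citations to Brosch, Laurent and Steenkamp and to Polak, who establish convexity of $\alpha_q$ for $q=2,3$ and $q\le 9$ respectively, from which minimality of the uniform point follows. Your proposal takes exactly that route — reduce the problem to convexity of $\alpha_q$ on the simplex, then conclude by averaging over the $S_N$-action and Jensen, using that the uniform point is the unique $S_N$-invariant probability vector because $S_N$ acts transitively on edges. That reduction is correct, and your self-contained convexity proof for $q=2$ is sound and rather elegant: writing $|e\cup f|^{-1}=\tfrac14\sum_{k\ge0}4^{-k}|e\cap f|^{k}$ (convergent since $|e\cap f|\le 2$), recognizing $(|e\cap f|)_{e,f}$ as the Gram matrix of the edge incidence vectors, and invoking the Schur product theorem for the Hadamard powers gives $M\succeq 0$ directly, without the symmetry-reduced semidefinite computation of the cited works. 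Your numerical check on $K_4$ ($25/144$ versus $3/16=27/144$ for a perfect matching) is also correct.

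The genuine gap is the one you yourself flag: convexity of $\alpha_q$ for general $q$ is not established, and without it the Jensen step has nothing to act on. This is precisely where the paper leaves the statement as a conjecture, and where the literature stalls at $q\le 9$ for computational reasons. Your two proposed escape routes remain speculative: the integral representation $\prod_i U_i^{-1}=\int_{[0,1]^q}\prod_i t_i^{U_i-1}\,\mathrm{d}\boldsymbol t$ does not obviously produce, after integrating out $\boldsymbol t$, a nonnegative combination of Hadamard powers of Gram matrices (the coupling between the $U_i$ across steps is exactly what breaks the clean $q=2$ structure), and the conditional recursion through $\beta_{q-1}(P,e)$ destroys the $S_N$-symmetry needed for the averaging argument. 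So the proposal is a correct and well-motivated reduction, consistent with the paper's cited partial results, but it does not prove the conjecture for all $q\ge 0$ — nor does the paper.
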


The above conjecture was shared in personal communication with Brosch, Laurent and Steenkamp, who showed that $\alpha_q(P)$, as a function of $P=(p_{\{i,j\}})_{i,j}$, is a convex polynomial for $q=2$ and 3. Hence, choosing all edge selection probabilities to be uniform will minimize $\alpha_q(\cdot)$~\cite[Theorem~2]{Brosch2020}. Polak later extended this convexity result to $q\le 9$ \cite[Theorem 1.1]{Polak2021}. 
In~\cite{Polak2021} convexity is established once the Hessian matrix of $\alpha_q$ is positive semidefinite via a symmetry reduction. Proving semidefiniteness of the obtained lower-dimensional matrices increases in complexity as it becomes computationally harder to obtain the matrix coefficients for larger values of $q$. Combining \cite[Theorem 1.1]{Polak2021} with Theorem~\ref{th:light_traffic} results in the following corollary.
\begin{corollary}
Let $Q_{\lambda}^*$ and $Q_{\lambda}(P)$ denote the total number of jobs in stationarity in a system with $N$~servers operating according to the classical power-of-two policy and a weighted power-of-two policy with edge selection probabilities $P=(p_{\{i,j\}})_{i,j}$, respectively. Then, for $q\le 9$,
\begin{equation} \label{eq:corr_LT}
\lim_{\lambda\downarrow 0}  \frac{\mathbb{P}\{Q_{\lambda}^*\ge q\}}{\mathbb{P}\{Q_{\lambda}(P)\ge q\}} \le 1.
\end{equation}
\end{corollary}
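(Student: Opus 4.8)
The plan is to combine the convexity result of Polak with the permutation symmetry of $\alpha_q$ to verify Conjecture~\ref{conj:alpha} for $q\le 9$, and then feed the resulting inequality into Theorem~\ref{th:light_traffic}. First I would fix $q\le 9$ and regard $\alpha_q$ as a function on the probability simplex $\Delta \coloneqq \{P=(p_e)_{e}\colon p_e\ge 0,\ \sum_{e} p_e = 1\}$ over the $E=\binom{N}{2}$ edges of the complete graph on $N$ vertices. This set is convex and compact, and $\alpha_q$ is a polynomial (hence continuous) by~\eqref{eq:alpha_k}, so a global minimizer $P^{\dagger}$ exists. By \cite[Theorem 1.1]{Polak2021}, $\alpha_q$ is convex on $\Delta$ for every $q\le 9$.

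Next I would exploit symmetry. The symmetric group $S_N$ acts on $\Delta$ by permuting the vertex labels, which induces a permutation of the edges; inspecting the definition~\eqref{eq:alpha_k} shows that $\alpha_q(\sigma\cdot P) = \alpha_q(P)$ for every $\sigma\in S_N$. Averaging the minimizer over its orbit, I set $\bar P \coloneqq \frac{1}{|S_N|}\sum_{\sigma\in S_N}\sigma\cdot P^{\dagger}$. By convexity (Jensen's inequality) together with invariance, $\alpha_q(\bar P)\le \frac{1}{|S_N|}\sum_{\sigma}\alpha_q(\sigma\cdot P^{\dagger}) = \alpha_q(P^{\dagger})$, so $\bar P$ is again a minimizer, and it is $S_N$-invariant by construction. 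Since the complete graph is edge-transitive, the only $S_N$-invariant point of $\Delta$ is the uniform distribution $p'_e\equiv 1/\binom{N}{2}$. Hence the uniform selection probabilities minimize $\alpha_q$, i.e. $\alpha_q^* = \min_P \alpha_q(P)$, which establishes Conjecture~\ref{conj:alpha} for $q\le 9$.

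Finally I would apply Theorem~\ref{th:light_traffic} with $P'$ the uniform selection probabilities (so that $Q_{\lambda}(P') = Q_{\lambda}^*$) and $P$ an arbitrary set of edge selection probabilities. The previous step gives $\alpha_q(P')=\alpha_q^*\le \alpha_q(P)$, and Theorem~\ref{th:light_traffic} then yields $\lim_{\lambda\downarrow 0}\mathbb{P}\{Q_{\lambda}^*\ge q\}/\mathbb{P}\{Q_{\lambda}(P)\ge q\}\le 1$, which is exactly~\eqref{eq:corr_LT}.

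The main obstacle is entirely absorbed into the convexity input of \cite[Theorem 1.1]{Polak2021}: once convexity is granted, the symmetrization argument is routine, because invariance combined with convexity forces the minimum to the unique symmetric point of the simplex, and Theorem~\ref{th:light_traffic} converts the ordering of $\alpha_q$-values into the stated limiting inequality. In short, the corollary itself is a short bookkeeping combination, while all of the genuine difficulty resides in establishing the positive semidefiniteness of the (symmetry-reduced) Hessian of $\alpha_q$ for $q\le 9$, which is imported as a black box.
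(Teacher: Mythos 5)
Your proposal is correct and follows essentially the same route as the paper: the paper likewise obtains the corollary by combining the convexity of $\alpha_q$ for $q\le 9$ from \cite[Theorem 1.1]{Polak2021} (which, together with the permutation symmetry of $\alpha_q$, forces the uniform selection probabilities to be a global minimizer, as already noted via \cite[Theorem~2]{Brosch2020}) with Theorem~\ref{th:light_traffic}. The only difference is that you spell out the orbit-averaging/Jensen step explicitly, whereas the paper imports that conclusion directly from the cited references.
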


Table~\ref{tab:alpha_fraction} gives a comparison between $\alpha_q^*$ and $\alpha_q(P)$ for several values of $q$ and when the underlying structure is a ring. 
We observe that for a fixed number of servers~$N$, in analogy to the observations in Subsection~\ref{subsec:intro_example},
the performance of the system governed by the homogeneous ring is closer to the performance of the uniform case than the heterogeneous rings
as $\alpha_q^*/\alpha_q(P)$ in this case is closer to 1. 

\begin{table}[h]
\centering
\begin{tabular}{l|cccc|ccc}
                           & \multicolumn{4}{c|}{$N=4$}  & \multicolumn{3}{c}{$N=8$} \\ \cline{2-8} 
                           & $q=2$   & $q=4$   & $q=10$ & $q=16$ & $q=2$   & $q=4$  & $q=10$ \\ \hline
hom. ring                  & 0.9804  & 0.9432  & 0.9046 & 0.9004 & 0.9754  & 0.8947 &      0.6586  \\
het. ring $\epsilon = 0.7$ & 0.9713  & 0.9055  & 0.8957 & 0.7850 & 0.9700  & 0.8707 &    0.5831   \\
het. ring $\epsilon =0.9$  & 0.9448  & 0.8063  & 0.5481 & 0.4509 & 0.9543  & 0.8051 &     0.4095  
\end{tabular}
\caption{The fraction of $\alpha^*_q/\alpha_q(P)$ for various values of $q$ when the edge selection probabilities~$P$ correspond to a ring structure. Lemma~3 implies that this fraction corresponds to $\mathbb{P}\{Q^*_{\lambda}\ge q\}/\mathbb{P}\{Q_{\lambda}(P)\ge q\}$ when $\lambda\downarrow 0$.}\label{tab:alpha_fraction}
\end{table}

Computing $\alpha_q$ for a given set of edge selection probabilities $P = (p_{\{i,j\}})_{i,j}$ is time-consuming as it requires summation over $|\mathcal{E}|^q$ terms, and also formed the bottleneck to prove the convexity results in~\cite{Polak2021} for values of $q\ge 10$. However, the above numerical results support the statement in Conjecture~\ref{conj:alpha}.

\begin{remark}\label{remark:BD}\upshape
The condition $\alpha_q(P') \le \alpha_q(P)$ in Theorem~\ref{th:light_traffic} is not sufficient to establish the stochastic dominance result in Conjecture~\ref{conj:dom_red} for any fixed value of~$\lambda$, even when this inequality  could be shown to hold for all $q\ge 1$, which is due to the behavior of the normalization constant. However, a sufficient condition would be
\begin{equation}\label{eq:condition_non_LT}
 \frac{\alpha_{q-1}(P')}{\alpha_{q}(P')} \ge \frac{\alpha_{q-1}(P)}{\alpha_{q}(P)}
\end{equation}
for all $q\ge 1$, which also implies the condition in Theorem~\ref{th:light_traffic}. The fact that $Q(P')\le_{\text{st}} Q(P)$ once condition~\eqref{eq:condition_non_LT} is fulfilled for all $q\ge 1$ follows from a direct comparison of the respective stationary distributions in~\eqref{eq:general_prob}. Details of the proof are deferred to Appendix~\ref{app:BD_dominance}.
\end{remark}

\begin{remark} \upshape
We used the product-form distributions to establish the
light-traffic result in Theorem~\ref{th:light_traffic}, while usually a light-traffic approach is only considered when explicit formulas are lacking, and then based on the powerful framework developed by Reiman and Simon~\cite{Reiman1989}.
The latter framework outlines an approach to determine the coefficients of the Taylor expansion in~\eqref{eq:taylor}.
To derive these coefficients, one has to take into account the arrival and departure times of individual jobs, as well as the exact service rate at each moment in time, which is complicated by the fact that multiple servers can be processing a replica of the same job. 
Hence, it is notationally and computationally more convenient to leverage the product-form expressions which directly furnish the desired coefficients in terms of~\eqref{eq:alpha_k}.
\end{remark}

\section{Discussion} \label{sec:extensions}

\subsection{Design implications}
In Section~\ref{sec:main_results} we proved a partial version of Conjecture~\ref{conj:dom_red} implying that non-uniform edge selection probabilities cannot yield better performance than uniform ones in a light-traffic regime. In many situations however, strictly uniform edge selection probabilities may simply not be feasible because of assignment constraints.  Theorem~\ref{th:light_traffic}, in conjunction with Lemma~\ref{lem:prelimit}, then provides a specific guideline for the design of an efficient assignment policy subject to these constraints as we will now illustrate.

Assume that there are $K$ different job types with arrival rates $\lambda_1,\dots, \lambda_K$ and $\sum_{k=1}^K \lambda_k = N\lambda$. In the system design one has to choose (once and for all) for each job type $k=1,\dots,K$ which server pair (or edge) $e\in\mathcal{E}$ is eligible for assignment.
For example, the various job types may correspond to requests for different data objects, which each can only be stored at two servers.
The assignment policy can thus be represented in terms of binary decision variables $(x_{e,k})_{e,k}$, which are equal to 1 if job type~$k$ can be assigned to the servers at the endpoints of edge~$e$, and 0 otherwise. The aim is to find values for the variables $\boldsymbol{x} = (x_{e,k})_{e,k}$ yielding edge selection probabilities $P(\boldsymbol{x}) = (p_e(\boldsymbol{x}))_e$ that stochastically minimize the number of jobs in the system. The edge selection probabilities may be expressed in terms of the decision variables as
\begin{equation}\label{eq:edge_sel_prob_optimization}
p_{e}(\boldsymbol{x}) = \frac{1}{N\lambda}\sum\limits_{k=1}^K \lambda_k x_{e,k}~~~\text{for all~}e\in\mathcal{E}.
\end{equation}

Recalling that $\mathbb{P}\{Q(P(\boldsymbol{x})) \ge q \}  = \alpha_q(P(\boldsymbol{x}))\cdot(N\lambda/\mu)^q + o(\lambda^q)$ for $q\ge 1$ and $\alpha_1(P(\boldsymbol{x}))\equiv 1/2$, Lemma~\ref{lem:prelimit} and Theorem~\ref{th:light_traffic} suggest that the following minimization problem should be solved in order to obtain the ideal distribution of the various types as captured by~\eqref{eq:edge_sel_prob_optimization}:
\begin{subequations}
\begin{align}
& \text{min} && \alpha_2\left( P(\boldsymbol{x})\right)  = \sum\limits_{\boldsymbol{c}\in\mathcal{E}^2}  \frac{p_{c_1}(\boldsymbol{x})}{2}\frac{p_{c_2}(\boldsymbol{x})}{|\{c_1\}\cup\{c_2\}|} &&&~ \\
&\text{s.t.} && \sum\limits_{e\in\mathcal{E}} x_{e,k} =1 && \text{for all~} k=1,\dots,K, \label{eq:constraint1} \\
&~ && N\lambda\sum\limits_{e\in\mathcal{I}}p_e(\boldsymbol{x}) =  \sum\limits_{k=1}^K\lambda_k\sum\limits_{e\in\mathcal{I}}x_{e,k} < \mu(\mathcal{I}) &&  \text{for all~}\mathcal{I}\subseteq\mathcal{E}, \label{eq:constraint2}\\
 &~ && x_{e,k} \in \{0,1\} && \text{for all~} (e,k)\in |\mathcal{E}|\times K. \label{eq:constraint3}
\end{align}
\end{subequations}
In the above optimization problem, \eqref{eq:constraint1} guarantees that each job type~$k$ gets assigned to precisely one server pair or edge. Moreover, \eqref{eq:constraint2} ensures that the system is stable, with with $\mu(\mathcal{I})$ as defined in~\eqref{eq:mu}, in accordance with the stability conditions identified in~\cite{Gardner2016queueing}.

\begin{remark}\upshape
A feasible solution $\boldsymbol{x} = (x_{e,k})_{e,k}\in \{0,1\}^{|\mathcal{E}|\times K}$ cannot be constructed for all arrival rate vectors $\{\lambda_k\}_k$. This is for instance the case when the sufficient conditions to guarantee stability, stated in Remark~\ref{remark:stability}, are not satisfied, even if the necessary condition $N\lambda = \sum_{k=1}^K \lambda_k < N\mu$  is met. A simple counter example can be constructed in a system with $N=4$ servers and $K=2$ job types with arrival rates $\lambda_1 = (2+\delta)\mu$ and $\lambda_2 = (2-2\delta)\mu$ for any $\delta\in(0,1)$. Even though $\lambda <\mu$, there exists no allocation of the two job types that yields a stable system as $\lambda_1 \ge 2\mu$.

\end{remark}


\subsection{Other load balancing policies}
The broader theme of the present paper is comparing the performance of weighted power-of-$d$ policies with that of the classical power-of-$d$ policy. 
As mentioned earlier, the notion that the performance of the latter policy serves as an upper bound for the performance of the former policies was supported through heuristic arguments and simulations by Gast~\cite{gast2015power}, Mitzenmacher~\cite{Mitzenmacher_thesis} and Turner~\cite{Turner1998} in a JSQ context.
We proved that this property indeed holds for redundancy policies both in small systems and in systems of arbitrary size in the light-traffic regime. We focused on redundancy policies in view of the explicit product-form distributions, but we expect that the stochastic comparison results extend to load balancing policies beyond redundancy policies. This introduces interesting directions for further research as the above methods cannot directly be applied to analyze these alternative policies.

The redundancy policy described in Section~\ref{sec:model_description} is often referred to as the \textit{redundancy cancel-on-completion} (c.o.c.)\ policy. 
A natural policy to investigate as well is the \emph{redundancy cancel-on-start} (c.o.s.) policy. Instead of discarding the redundant replicas once one of them finished service, redundant replicas are now discarded once one of them starts service. 
Hence, the job is served at the server where its replica encountered the smallest workload, yielding an alternative implementation of the \textit{Join-the-Smallest-Workload} (JSW) policy~\cite{Adan2018,ayesta2018unifying}.
Whenever several replicas find idle servers upon arrival, the job will undergo service at the server that has been idle for the longest time, referred to as Assign-to-Longest-Idle-Server (ALIS). 

The system occupancy at time $t$ is given by $(c_1,\dots,c_{Q'(t)}; u_1,\dots,u_{L(t)})$ with $Q'(t)$ the total number of \textit{waiting} jobs in the system at time $t$ and $c_q\in\mathcal{E}$ denoting the $q$th oldest waiting job in the system. There are $L(t)$ idle servers at time $t$ and server $u_l\in\{1,\dots,N\}$ is the $l$th longest idle server in the system. Note that it is not feasible to have simultaneously waiting type-$\{i,j\}$ jobs and either server $i$ or $j$ idle.
It was shown in \cite{Adan2018,ayesta2018unifying} that, under suitable stability conditions, the stationary distribution of the system occupancy is
\begin{equation}\label{eq:stat_distr_cos}
\pi^{\text{c.o.s.}}(c_1,\dots,c_{Q'}; u_1,\dots,u_{L}) = C'\prod\limits_{i=1}^{Q'} \frac{N\lambda p_{c_{i}}}{\mu(c_1,\dots,c_i)} \prod\limits_{l=1}^{L}\frac{\mu}{\lambda_{\mathcal{C}(u_1,\dots,u_l)}},
\end{equation}
with $C'$ the normalization constant, $\mu(c_1,\dots,c_i)$ as defined in~\eqref{eq:mu} and 
\begin{equation}
\lambda_{\mathcal{C}(u_1,\dots,u_l)} = N\lambda \sum\limits_{e\in\mathcal{E}\colon e\cap\{u_1,\dots,u_l\} \neq \emptyset} p_{e}
\end{equation}
the total arrival rate of jobs that can be served by the idle servers $\{u_1,\dots,u_l\}$. Comparing this product-form expression with~\eqref{eq:statdistr} for the redundancy c.o.c.\ policy reveals that obtaining stationary probabilities at an aggregate level is now also affected by the servers that are idle and the relative times they became idle.

For small systems it is possible to obtain the stationary distribution of the total number of jobs in the system from~\eqref{eq:stat_distr_cos}. The next two lemmas mirror the results in Subsection~\ref{subsec:intro_example}.

\begin{lemma}\label{lem:cos_complete}
The stationary distribution of the total number of jobs in a system with the uniform complete graph structure on $N=4$~servers operating under the redundancy c.o.s.\ policy is given by 
\begin{equation}\label{eq:stat_complete_cos}
\mathbb{P}\{Q^{\text{\emph{*,c.o.s.}}}_4 = q \} = C^{\text{\emph{*,c.o.s.}}}_4\left\{ 20\rho^{q} + \frac{4}{3^3}\left(\frac{\rho}{3}\right)^{q}-\frac{5\cdot 2^5}{3^3} \left(\frac{2\rho}{3}\right)^{q}\right\},
\end{equation}
with $q\ge 1$, $\rho \coloneqq \frac{\lambda}{\mu}<1$ and
\begin{equation}
C^{\text{\emph{*,c.o.s.}}}_4 = \dfrac{\left(1-\rho\right)\left(3-\rho\right)\left(3-2\rho\right)}{\left(1+\rho\right)\left(3+\rho\right)\left(3+2\rho\right)}.
\end{equation}
\end{lemma}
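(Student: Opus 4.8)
The plan is to read off $\mathbb{P}\{Q^{*,\mathrm{c.o.s.}}_4=q\}$ directly from the product form \eqref{eq:stat_distr_cos} by aggregating the stationary weights over all feasible states with a prescribed total number of jobs. Under c.o.s.\ every busy server processes exactly one in-service job, so the total number of jobs is $Q=Q'+(4-L)$, where $Q'$ is the number of waiting jobs and $L$ the number of idle servers. Hence $\mathbb{P}\{Q=q\}$ is the sum of the weights over $L=0,\dots,4$, over all ordered idle tuples $(u_1,\dots,u_L)$, and over all waiting sequences $(c_1,\dots,c_{Q'})$ with $Q'=q-4+L$, subject to the feasibility constraint that every waiting edge lies inside the busy set $B$ (the complement of the idle servers). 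This is the natural analogue of the aggregation used for Lemma~\ref{lem:uniform complete graph}, now with the extra idle-server bookkeeping.

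For the uniform complete graph this sum decouples. First I would exploit symmetry: $\lambda_{\mathcal{C}(u_1,\dots,u_l)}$ depends only on the number $l$ of idle servers, through the edge-incidence counts $3,5,6,6$ out of $6$ for $l=1,2,3,4$. Thus the idle factor $\prod_{l=1}^L \mu/\lambda_{\mathcal{C}(u_1,\dots,u_l)}$ collapses to a constant $G_L$ depending on $L$ alone, and there are exactly $4!/(4-L)!$ ordered idle tuples of size $L$. Second, the waiting factor equals $(2\rho/3)^{Q'}$ times $\prod_{i=1}^{Q'}1/|\bigcup_{j\le i}\{c_j\}|$, and summing the latter over all edge sequences inside a busy set of size $m=4-L$ yields a quantity $S_m(k)$ depending only on $m$ and $k$. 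One then obtains $\mathbb{P}\{Q=q\}=C'\sum_{L}\tfrac{4!}{(4-L)!}\,G_L\,(2\rho/3)^{q-4+L}\,S_{4-L}(q-4+L)$.

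The computational core is evaluating $S_m(k)$, whose denominators are coupled through the running union of endpoints. I would compute them by conditioning on one further edge and tracking the current union \emph{size} $s$ as a small Markov-type recursion: from size $s$, an added edge either leaves the union unchanged (weight $1/s$) or enlarges it (weight $1/(s+1)$ or, for a size-$2$ union in $K_4$, $1/(s+2)$), with multiplicities dictated by the triangle/$K_4$ geometry. Solving these linear recursions gives $S_2(k)=2^{-k}$, $S_3(k)=2-2^{-k}$ and $S_4(k)=\tfrac92(3/2)^k-4+\tfrac12\,2^{-k}$; as a reassuring check, $(2\rho/3)^k S_4(k)$ reproduces exactly the bracket in Lemma~\ref{lem:uniform complete graph}. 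Substituting the $S_m$ and collecting terms, every contribution collapses onto the three geometric modes $\rho^q$, $(2\rho/3)^q$ and $(\rho/3)^q$, yielding the stated form; the small-$q$ cases, where fewer values of $L$ are admissible, must be checked separately to agree with the single closed expression.

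The remaining step, and the main obstacle, is the normalization. Summing the three geometric series over $q\ge 1$ and adding the genuinely separate empty-system term $\mathbb{P}\{Q=0\}=C'\cdot 24\,G_4$, which is \emph{not} recovered by substituting $q=0$ into the general formula, gives $1/C'$ as a rational function of $\rho$ whose numerator is a cubic in $\rho$ that I expect to factor as $(1+\rho)(3+\rho)(3+2\rho)$ against $(1-\rho)(3-\rho)(3-2\rho)$, producing $C^{*,\mathrm{c.o.s.}}_4$. The delicate points throughout are the coupling in $S_m(k)$, the anomalous empty-system term, and the bookkeeping that turns the $L$-sum into a clean combination of the three modes; each is elementary but error-prone, and the cancellation to a clean cubic in the normalization is where a sign or coefficient slip would be hardest to detect.
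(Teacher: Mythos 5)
Your proposal is correct and follows essentially the same route as the paper's proof: aggregate the c.o.s.\ product form over states with $Q'+(4-L)=q$, split by the number $L$ of idle servers, use the symmetry of $K_4$ to collapse the idle factor to a constant $G_L$ (your incidence counts $3,5,6,6$ out of $6$ are right), and evaluate the waiting-job sum by tracking the running union of busy endpoints; your recursion values $S_2(k)=2^{-k}$, $S_3(k)=2-2^{-k}$, $S_4(k)=\tfrac92(3/2)^k-4+\tfrac12\,2^{-k}$ all check out against the paper's direct enumeration of server-rate sequences, as does your treatment of the anomalous $L=3,4$ (small-$q$) terms. One substantive caveat: carrying your computation through gives $\mathbb{P}\{Q=q\}=C'\tfrac{9}{40}\rho^{-4}\bigl\{20\rho^{q}-30\bigl(\tfrac{2\rho}{3}\bigr)^{q}+12\bigl(\tfrac{\rho}{3}\bigr)^{q}\bigr\}$ with $\mathbb{P}\{Q=0\}=24C'G_4=C'\tfrac{9}{40}\rho^{-4}$, and this — not the printed bracket with coefficients $\tfrac{4}{27}$ and $-\tfrac{5\cdot 2^5}{27}$ — is what normalizes to the stated $C^{*,\mathrm{c.o.s.}}_4$ (the printed coefficients sum to more than $1$ for moderate $\rho$; they are the coefficients of the $(\cdot)^{q-4}$ representation carried over without the mode-dependent factors $\rho^{-4}$, $\tfrac{81}{16}\rho^{-4}$, $81\rho^{-4}$). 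So your method is sound and agrees with the paper's intermediate expression, but when you "collect terms into the three modes" you should expect $20$, $-30$, $12$ rather than the coefficients displayed in~\eqref{eq:stat_complete_cos}.
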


\begin{lemma}\label{lem:cos_ring}
The stationary distribution of the total number of jobs in a system with the homogeneous ring structure on $N=4$~servers operating under the redundancy c.o.s.\ policy is given by 
\begin{equation}\label{eq:stat_ring_cos}
\mathbb{P}\{Q^{\text{\emph{hom,c.o.s.}}}_4 = q \} = C^{\text{\emph{hom,c.o.s.}}}_4\left\{ 5\rho^{q} + \frac{1}{3}\left(\frac{\rho}{2}\right)^{q}-2 \left(\frac{2\rho}{3}\right)^{q}\right\},
\end{equation}
with $q\ge 1$, $\rho \coloneqq \frac{\lambda}{\mu}<1$ and
\begin{equation}
C^{\text{\emph{hom,c.o.s.}}}_4 = \dfrac{48\left(1-\rho\right)\left(2-\rho\right)\left(3-2\rho\right)}{-2\rho^3+55\rho^2+121\rho+66}.
\end{equation}
\end{lemma}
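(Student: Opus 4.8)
The plan is to follow the template used for Lemma~\ref{lem:heterogen_ring4}, but now starting from the c.o.s.\ product form~\eqref{eq:stat_distr_cos} specialized to $N=4$ and the homogeneous ring. Here the four edges $\{1,2\},\{2,3\},\{3,4\},\{4,1\}$ each carry weight $1/4$, so every arrival factor equals $N\lambda p_{c_i}=\lambda$ and each waiting-job term reduces to $\rho/|\bigcup_{j=1}^i\{c_j\}|$ with $\rho=\lambda/\mu$. The crucial new feature relative to the c.o.c.\ case is that a state now carries both a list of waiting jobs $(c_1,\dots,c_{Q'})$ and an ordered list of idle servers $(u_1,\dots,u_L)$; since each of the $4-L$ busy servers processes exactly one job, the total number of jobs satisfies $Q=Q'+(4-L)$. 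Computing $\mathbb{P}\{Q^{\text{hom,c.o.s.}}_4=q\}$ therefore amounts to summing~\eqref{eq:stat_distr_cos} over all admissible states with $Q'+(4-L)=q$.

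First I would classify the admissible states by the number $L$ of idle servers, exploiting the ring symmetry to keep the case count small. A waiting job of type $\{i,j\}$ is admissible only when both $i$ and $j$ are busy, which restricts the edge set available to the waiting jobs in each case: $L=4$ is the empty system ($q=0$); $L=3$ leaves a single busy server, hence no waiting jobs ($q=1$); $L=2$ splits into an adjacent idle pair, whose two busy servers span exactly one edge, and a non-adjacent idle pair, which admits no waiting jobs, both first contributing at $q=2$; $L=1$ leaves three busy servers forming a path with two usable edges ($q\ge3$); and $L=0$ leaves all four edges usable ($q\ge4$). Each fixed $q$ thus receives contributions from several $(L,Q')$ pairs.

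Next, for each configuration I would evaluate the two product factors separately. The idle-server factor $\prod_{l=1}^L \mu/\lambda_{\mathcal{C}(u_1,\dots,u_l)}$ must be summed over all $L!$ orderings of the idle set, where for the homogeneous ring $\lambda_{\mathcal{C}}$ equals $\lambda$ times the number of ring edges incident to the current idle set (for instance $2\lambda$ for one idle server, $3\lambda$ for an adjacent idle pair, and $4\lambda$ for a non-adjacent idle pair). The waiting-job factor $\prod_{i=1}^{Q'}\rho/|\bigcup_{j=1}^i\{c_j\}|$ must be summed over all type sequences of length $Q'=q-(4-L)$ supported on the busy induced subgraph; this is precisely the computation carried out for Lemma~\ref{lem:heterogen_ring4}, now performed on the full ring, on a two-edge path, and on a single edge. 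Enumerating the finitely many cumulative service-rate sequences $(|c_1|,|c_1\cup c_2|,\dots)$, whose entries lie in $\{2,3,4\}$, turns each inner sum into a geometric series with bases drawn from $\{\rho,\rho/2,2\rho/3\}$. Aggregating over all $L$ then collapses into a linear combination of $\rho^q$, $(\rho/2)^q$ and $(2\rho/3)^q$, after which $C^{\text{hom,c.o.s.}}_4$ is fixed by the normalization $\sum_{q\ge0}\mathbb{P}\{Q^{\text{hom,c.o.s.}}_4=q\}=1$.

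I expect the main obstacle to be the bookkeeping induced by the coupling between idle servers and waiting jobs. Unlike the c.o.c.\ analysis, the idle-server factor is genuinely order-dependent through the cumulative rates $\lambda_{\mathcal{C}}$, so one must sum over orderings of the idle set; a single $q$ mixes several $(L,Q')$ combinations; and one must identify exactly which (idle set, waiting-sequence) pairs are admissible so as to avoid both omissions and double counting. Verifying that the resulting geometric pieces recombine into the stated three-term expression, with the cross terms cancelling to leave precisely the coefficients $5$, $1/3$ and $-2$ together with the cubic denominator of $C^{\text{hom,c.o.s.}}_4$, is where the bulk of the routine-but-delicate algebra lies.
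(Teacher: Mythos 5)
Your proposal follows essentially the same route as the paper's proof: specialize the c.o.s.\ product form to the $N=4$ homogeneous ring, partition the states by the number $L$ of idle servers using $Q=Q'+(4-L)$, sum the order-dependent idle-server factor over orderings of the idle set, and reduce the waiting-job sum on the busy induced subgraph (single edge, two-edge path, or full ring) to geometric series via enumeration of the cumulative service-rate sequences, then normalize. If anything, you are slightly more explicit than the paper about the non-adjacent idle-pair configuration, which admits no waiting jobs and only enters at $q=2$; otherwise the two arguments coincide.
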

The proofs of Lemmas~\ref{lem:cos_complete} and~\ref{lem:cos_ring} are given in Appendix~\ref{app:cos}.
A comparison between the two stationary distributions in Lemmas~\ref{lem:cos_complete} and~\ref{lem:cos_ring} for various values of $\rho$ can be found in Figure~\ref{fig:dominance_cos}, which suggests that an equivalent result as in Proposition~\ref{prop:stoch_dom} holds for the redundancy c.o.s.\ policy, namely, $Q_4^{*,\text{c.o.s.}}\le_{\text{st}} Q_4^{\text{hom,c.o.s.}}$.

\begin{figure}[h]
\begin{subfigure}[t]{0.49\textwidth}
  \centering
  \includegraphics{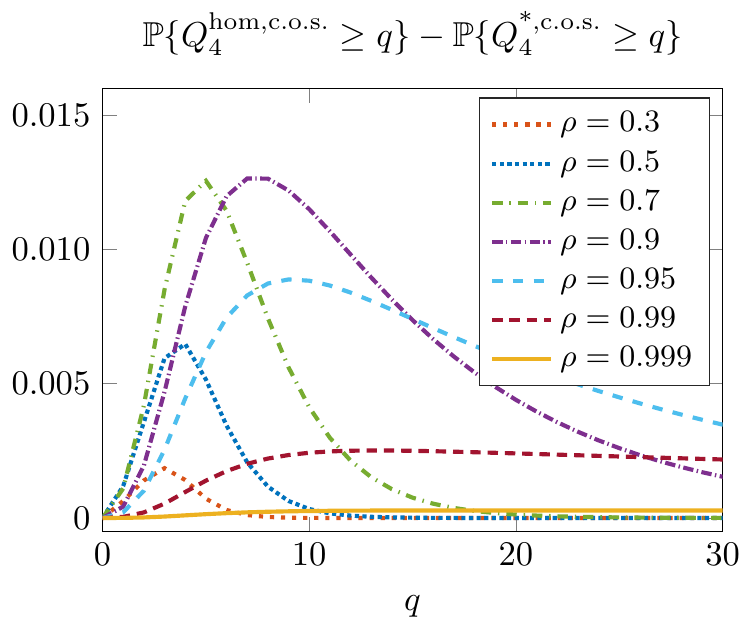}
  \caption{Redundancy c.o.s., based on Lemmas~\ref{lem:cos_complete} and~\ref{lem:cos_ring}.}
  \label{fig:dominance_cos}
\end{subfigure}%
~
\begin{subfigure}[t]{.49\textwidth}
  \centering
  \includegraphics{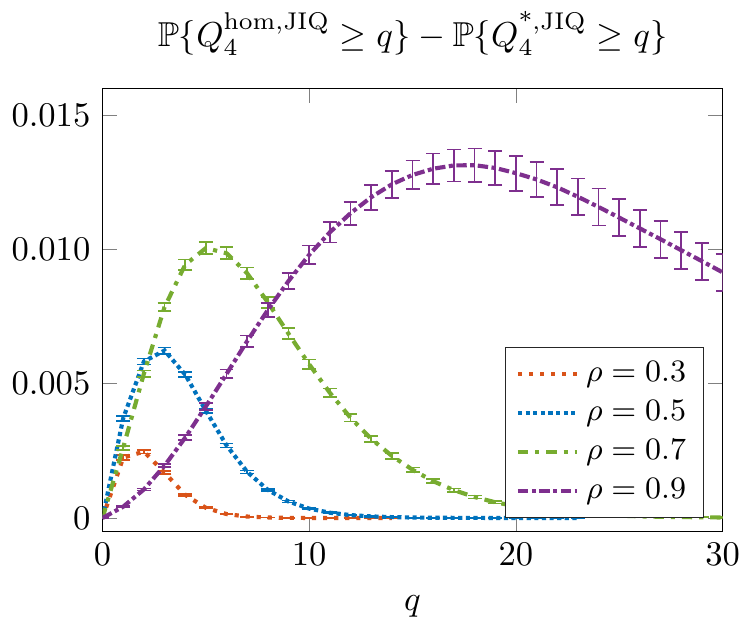}
  \vspace{-0.45cm}
  \caption{JIQ, based on simulations.}
  \label{fig:dominance_JIQ}
\end{subfigure}
\caption{A comparison between the stationary distributions of the total number of jobs for four-server systems with a uniform complete graph and a homogeneous ring as underlying structures.}
\label{fig:dominance}
\end{figure}

\begin{remark}\upshape
In Figure~\ref{fig:dominance_cos} it can be seen that the difference between the cumulative distributions of $Q^{\text{hom,c.o.s.}}_4$ and $Q^{\text{*,c.o.s.}}_4$, though still positive, narrows for values of $\lambda$ approaching~$\mu$. This observation is in line with the heavy-traffic results in~\cite[Theorem~1]{Cardinaels2022} for both redundancy c.o.c.\ and c.o.s.\ policies, showing that $(1-\lambda/\mu)Q(P)$ converges in distribution to an exponentially distributed random variable with unit mean for any set of edge selection probabilities that do not create local bottlenecks when $\lambda\uparrow\mu$.
\end{remark}

A crucial difference between the product-form distributions for redundancy c.o.c.\ and c.o.s.\ is the fact that the normalization constant of the former corresponds to the probability that the system is completely idle, while for the latter it corresponds to the probability that there are no waiting jobs in the system and all servers are occupied. From this it immediately follows that the normalization constant will not tend to 1 in a light-traffic regime, implying that a direct generalization of the reasoning in Subsection~\ref{sec:main_results} is not applicable. It is worthwhile to note that there exist alternative state descriptors for which the normalization constant does coincide with the probability that the system is completely idle, see for instance~\cite[Theorem 3.10]{Gardner2020}. However, the corresponding product-form stationary distribution is inherently more complex than the one in~\eqref{eq:stat_distr_cos},
which would yield additional challenges when proving an equivalent formulation of Theorem~\ref{th:light_traffic} for the redundancy c.o.s.\ policy.

As mentioned earlier, the notion that non-uniform sampling cannot yield better performance than uniform sampling is a quite natural one and expected to apply more broadly for load balancing policies beyond JSQ and redundancy strategies.

We will now numerically illustrate this for the so-called Join-the-Idle-Queue (JIQ) policy, which has attracted significant attention in the load balancing literature recently. The JIQ policy assigns an arriving job to an idle (compatible) server, if any. Otherwise, the job is assigned to a randomly selected (compatible) server. 
Since no expressions are available for the stationary distribution, we used simulations to compare the empirical distributions of systems with a homogeneous ring and a uniform complete graph for $N=4$ servers for various values of~$\rho$. From Figure~\ref{fig:dominance_JIQ} it can again be observed that the stochastic ordering result holds, i.e., $Q_4^{*,\text{JIQ}}\le_{\text{st}} Q_4^{\text{hom,\text{ JIQ}}}$. The comparison in Figure~\ref{fig:dominance_JIQ} is based on 50 simulation runs per value of~$\rho$, each run consisting of 10 000 000 events. 
Besides the average difference between the cumulative distributions of $Q^{\text{hom,JIQ}}_4$ and $Q^{\text{*,JIQ}}_4$, also its 95$\%$ confidence intervals are plotted.

\bibliographystyle{abbrv}
\bibliography{references}

\begin{appendices}
\numberwithin{equation}{section}

\section{Four-server systems}\label{app:stoch_dom}
\subsection{Proofs of Lemmas~\ref{lem:uniform complete graph} and~\ref{lem:heterogen_ring4}}\label{app:coc_derivation}
\begin{proof}[Proof of Lemma~\ref{lem:uniform complete graph}]
First of all, note that 
\begin{equation}
\mathbb{P}\{Q^*_4 = q\} = C\sum_{\boldsymbol{c}\in \mathcal{E}^q} \prod\limits_{i=1}^q\frac{N\lambda p_{c_i}}{\mu(c_1,\dots,c_i)} = C\left(\frac{2\lambda}{3\mu}\right)^q \sum_{\boldsymbol{c}\in \mathcal{E}^q} \prod\limits_{i=1}^q \left( |c_1\cup\dots\cup c_i|\right)^{-1},
\end{equation}
 because $p_{\{i,j\}} =1/6$ for each edge in the setting under consideration and $|c_1\cup\dots\cup c_i|$ denotes the total number of busy servers processing jobs of types $c_1,\dots, c_i$. 
 
 Then, we can enumerate all server rate sequences $(|c_1|,|c_1\cup c_2|,\dots,|c_1\cup\dots\cup c_q|)$ and count how many states will result in these particular sequences. 
 First, the sequence $(2,\dots,2)$ can only be generated by states where all the jobs are of the same type, hence there are six such sequences. Second, the sequences $(2,\dots,2,3,\dots,3)$ with $q_1$ 3 entries are considered. For the first job in the state there are six options, and all following $q-q_1-1$ jobs must be of the same type as the first job, say $\{i,j\}$. An arrival of the $(q-q_1+1)$th job increases the number of covered servers from two to three, and there are four such job types that can do this. For example, say edge $\{j,k\}$ is selected. Then, all remaining jobs can be of types $\{i,j\}$, $\{j,k\}$ or $\{i,k\}$. This results in $6\cdot 4\cdot 3^{q_1-1}$ different states that lead to this server rate sequence. Third, in a similar way one can see that there are $6^{q_1}$ states that will lead to the server rate sequence $(2,\dots,2,4,\dots,4)$ with $q_1$ 4 entries. The fourth server rate sequence is given by
\[
(\underbrace{2,\dots,2}_{q-q_1-q_2},\underbrace{3,\dots,3}_{q_1},\underbrace{4,\dots,4}_{q_2}).
\]
In total there are $6\cdot 4 \cdot 3^{q_1-1}\cdot3 \cdot 6^{q_2-1} = 4\cdot 3^{q_1}\cdot 6^{q_2}$ states that result in the above service rate vector.

 Summing over all possible values of $q_1$ and $q_2$ results in 
 \begin{equation}\label{eq:stat_unif_inter}
 \begin{array}{rcl}
 \mathbb{P}\{Q^*_4 = q\} & = &  C\left(\frac{2\lambda}{3\mu}\right)^q \left\{  \frac{6}{2^q} {+} 8\sum\limits_{q_1 = 1}^{q-1} \dfrac{3^{q_1}}{2^{q-q_1}3^{q_1}} {+} \sum\limits_{q_1 = 1}^{q-1} \dfrac{6^{q_1}}{2^{q-q_1}4^{q_1}} {+} 4\sum\limits_{q_1 = 1}^{q-2} \sum\limits_{q_2 = 1}^{q-q_1-1} \dfrac{ 3^{q_1}\cdot 6^{q_2}}{2^{q-q_1-q_2}3^{q_1}4^{q_2}} \right\}\\

  & =&  C\left\{-4 \left(\frac{2\lambda}{3\mu}\right)^q + \frac{1}{2}\left(\frac{\lambda}{3\mu}\right)^q + \frac{9}{2}\left(\frac{\lambda}{\mu}\right)^q \right\}.
    \end{array}
 \end{equation}
 
 Finally, the expression for the normalization constant $C$ can be obtained by summing the expression~\eqref{eq:stat_unif_inter} for all values $q\ge0$ and this leads to
 \begin{equation}
 C = \frac{1}{9}\left(1-\rho\right)\left(3-\rho\right)\left(3-2\rho\right),
 \end{equation}
with $\rho\coloneqq \frac{\lambda}{\mu}$. This concludes the proof.
\end{proof}

\begin{proof}[Proof of Lemma~\ref{lem:heterogen_ring4}]
The method to derive the stationary distribution when the underlying graph is a ring graph is similar to the method outlined in the proof of Lemma~\ref{lem:uniform complete graph}. The main difference is that the edge selection probabilities are no longer uniform, but given by the probabilities in~\eqref{eq:hetro_ring_prob}. The reasoning from the previous proof can be extended by taking into account whether the edge $\{i,i+1\}$ of the first job is such that $i$ is even or odd. This will lead to
\begin{equation}
\begin{array}{rcl}
\mathbb{P}\{Q^{\mathrm{het}}_4(\epsilon) = q\} & = &  C\left(\frac{4\lambda}{\mu}\right)^q  \sum\limits_{(c_1,\dots,c_q)\in \mathcal{E}^q} \prod\limits_{i=1}^q \frac{p_{c_i}}{|c_1\cup\dots\cup c_i|}\\
& = & C\left(\frac{4\lambda}{\mu}\right)^q  \left\{ 
\dfrac{1}{2^q}2\left(\frac{\epsilon}{2}\right)^q + \dfrac{1}{2^q}2\left(\frac{1-\epsilon}{2}\right)^q  \right. \\

& &\left. 
+ \sum\limits_{q_1 = 1}^{q-1} \dfrac{8}{2^{q-q_1}3^{q_1}}\left[ \frac{1-\epsilon}{2}\left(\frac{\epsilon}{2}\right)^{q-q_1}\left(\frac{1}{2}\right)^{q_1} + \frac{\epsilon}{2}\left(\frac{1-\epsilon}{2}\right)^{q-q_1}\left(\frac{1}{2}\right)^{q_1}\right]\right.\\

& &\left. 
+ \sum\limits_{q_1 = 1}^{q-1} \dfrac{2}{2^{q-q_1}4^{q_1}}\left[ \left(\frac{\epsilon}{2}\right)^{q-q_1+1} + \left(\frac{1-\epsilon}{2}\right)^{q-q_1+1}\right] \right. \\

& &\left. 
+ \sum\limits_{q_1 = 1}^{q-2} \sum\limits_{q_2 = 1}^{q-q_1-1} \dfrac{4}{2^{q-q_1}3^{q_1}4^{q_2}}\left[ \left(\frac{\epsilon}{2}\right)^{q-q_1-q_2}\frac{1-\epsilon}{2}\left(\frac{1}{2}\right)^{q_1} + \left(\frac{1-\epsilon}{2}\right)^{q-q_1-q_2}\frac{\epsilon}{2}\left(\frac{1}{2}\right)^{q_1}\right]
 \right\}.
\end{array}
\end{equation}
Simplification of the previous expression and summing over all states to determine the normalization constant $C$ will eventually lead to the expression~\eqref{eq:stat_ring_4}. This concludes the proof.
\end{proof}

It can be seen that the probability to observe an empty system, i.e.,
\begin{equation}
\mathbb{P}\{Q^{\mathrm{het}}_4(\epsilon) = 0\} = \frac{\left(1-\rho\right)\left(1-(1-\epsilon)\rho\right)\left(1-\epsilon\rho\right)\left(3-2\rho\right)}{3-2\rho+ (1-\epsilon)\epsilon \rho^2} \in [0,1],
\end{equation}
is maximal when $\epsilon = 1/2$ and decreases the more $\epsilon$ deviates from $1/2$. This observation can be made after inspection of the derivative
\begin{equation}
\frac{\mathrm{d}}{\mathrm{d}\epsilon} \mathbb{P}\{Q^{\mathrm{het}}_4(\epsilon) = 0\} = 
\rho \left(1-\rho\right)\left(3-2\rho\right)\left(2-\rho\right)\frac{1-2\epsilon}{\left(3-2\rho+(1-\epsilon)\epsilon\rho^2\right)^2},
\end{equation}
where the only non-positive factor is $(1-2\epsilon)$.

Note that for the homogeneous ring on $N=4$ servers, the queue length stationary distribution is given by
\begin{equation}
\mathbb{P}\{Q^{\mathrm{hom}}_4 = q\} = \frac{\left(1-\rho\right)\left(2-\rho\right)\left(3-2\rho\right)}{\left(6-\rho\right)}
\left\{ -6 \left(\frac{2\rho}{3}\right)^q + 2\left(\frac{\rho}{2}\right)^q + 5\rho^q  \right\}
\end{equation}
via substitution of $\epsilon=1/2$ into~\eqref{eq:stat_ring_4}.

\subsection{Proof of Proposition~\ref{prop:stoch_dom}}\label{app:stoch_dom_proof}
\begin{proof}[Proof of Proposition~\ref{prop:stoch_dom}]
Using the stationary distributions derived in Lemmas~\ref{lem:uniform complete graph} and \ref{lem:heterogen_ring4} (with $\epsilon = 1/2$) it can be seen that for $q\ge 0$: 
\begin{equation}
\begin{array}{rcl}
\mathbb{P}\{Q^*_4 \ge q\}  &=&  -\frac{4}{3}\left(1-\frac{\lambda}{\mu}\right)\left(3-\frac{\lambda}{\mu}\right)\left(\frac{2\lambda}{3\mu}\right)^q + \frac{1}{6} \left(1-\frac{\lambda}{\mu}\right)\left(3-2\frac{\lambda}{\mu}\right)\left(\frac{\lambda}{3\mu}\right)^q \\
& & + \frac{1}{2}\left(3-\frac{\lambda}{\mu}\right)\left(3-2\frac{\lambda}{\mu}\right)\left(\frac{\lambda}{\mu}\right)^q\\

\mathbb{P}\{Q^{\mathrm{hom}}_4 \ge q\}  &=&  -18\frac{\left(1-\frac{\lambda}{\mu}\right)\left(2-\frac{\lambda}{\mu}\right)}{\left(6-\frac{\lambda}{\mu}\right)}\left(\frac{2\lambda}{3\mu}\right)^q + 4 \frac{\left(1-\frac{\lambda}{\mu}\right)\left(3-2\frac{\lambda}{\mu}\right)}{\left(6-\frac{\lambda}{\mu}\right)}\left(\frac{\lambda}{2\mu}\right)^q \\
& & + 5\frac{\left(3-2\frac{\lambda}{\mu}\right)\left(2-\frac{\lambda}{\mu}\right)}{\left(6-\frac{\lambda}{\mu}\right)}\left(\frac{\lambda}{\mu}\right)^q.\\
\end{array}
\end{equation}
It is sufficient to show that $\mathbb{P}\{Q^{\mathrm{hom}}_4 \ge q\} - \mathbb{P}\{Q^*_4 \ge q\} > 0$ for any $q\ge 1$ such that~\eqref{eq:stoch_dom_result} holds. Dividing by common factors results in the following equivalent inequality:
\begin{equation}
\begin{array}{rcl}
g_{\lambda}(q) &:=&  \frac{9}{2} \left(1-\frac{\lambda}{\mu}\right)\left(2+\frac{\lambda}{\mu}\right)\left(3-2\frac{\lambda}{\mu}\right) + 36 \left(1-\frac{\lambda}{\mu}\right)\left(3-2\frac{\lambda}{\mu}\right)\left(\frac{1}{2}\right)^q \\
& & - \frac{3}{2}\left(1-\frac{\lambda}{\mu}\right)\left(3-2\frac{\lambda}{\mu}\right)\left(6-\frac{\lambda}{\mu}\right)\left(\frac{1}{3}\right)^q  -6 \left(1-\frac{\lambda}{\mu}\right)\left(3-2\frac{\lambda}{\mu}\right)\left(6+\frac{\lambda}{\mu}\right)\left(\frac{2}{3}\right)^q>0.
\end{array}
\end{equation}
Now, it is clear that $g_{\lambda}(1) = \frac{\lambda}{\mu}\left(1-\frac{\lambda}{\mu}\right)\left(3-2\frac{\lambda}{\mu}\right)> 0$. Moreover, it can be seen that 
\begin{equation}
\begin{array}{rcl}
g_{\lambda}(q+1)-g_{\lambda}(q) &=& \left(1-\frac{\lambda}{\mu}\right)\left(3-2\frac{\lambda}{\mu}\right) \left\{ -18 \left(\frac{1}{2}\right)^q+\left(6-\frac{\lambda}{\mu}\right)\left(\frac{1}{3}\right)^q + 2\left(6+\frac{\lambda}{\mu}\right)\left(\frac{2}{3}\right)^q \right\}.
\end{array}
\end{equation}
If $q=1$, then
\begin{equation}
\begin{array}{rcl}
g_{\lambda}(2)-g_{\lambda}(1) &=& \left(1-\frac{\lambda}{\mu}\right)\left(3-2\frac{\lambda}{\mu}\right) \frac{2}{3}>0.
\end{array}
\end{equation}
If $q\ge2$, the difference can be lower bounded as follows
\begin{equation}
\begin{array}{rcl}
g_{\lambda}(q+1)-g_{\lambda}(q) &>& \left(1-\frac{\lambda}{\mu}\right)\left(3-2\frac{\lambda}{\mu}\right) \left\{ -18 \left(\frac{1}{2}\right)^q+5\left(\frac{1}{3}\right)^q + 12\left(\frac{2}{3}\right)^q \right\}\\

&\ge& 6\left(1-\frac{\lambda}{\mu}\right)\left(3-2\frac{\lambda}{\mu}\right) \left\{ -3 \left(\frac{1}{2}\right)^q + 2\left(\frac{2}{3}\right)^q \right\}>0.
\end{array}
\end{equation}
This concludes the proof. 
\end{proof}

\section{Remark~\ref{remark:BD}}\label{app:BD_dominance}
In this section we will elaborate on the statement made in Remark~\ref{remark:BD}.

\begin{theorem}\label{th:BD_extension}
If for any $q\ge 1$, $P' = (p'_{\{i,j\}})_{i,j}$ and $P = (p_{\{i,j\}})_{i,j}$ are two sets of edge selection probabilities such that 
\begin{equation}\label{eq:stoch_dom_BD_condition}
 \frac{\alpha_{q-1}(P')}{\alpha_{q}(P')} \ge \frac{\alpha_{q-1}(P)}{\alpha_{q}(P)},
\end{equation}
then $Q(P')$ is stochastically smaller than $Q(P)$, i.e., 
\begin{equation}\label{eq:stoch_dom_BD}
Q(P') \le_{\mathrm{st}} Q(P).
\end{equation}
\end{theorem}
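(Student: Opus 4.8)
The plan is to prove the stochastic dominance $Q(P') \le_{\mathrm{st}} Q(P)$ directly from the explicit form of the stationary distributions. Recall from~\eqref{eq:general_prob} that for any edge selection probabilities $P$ and any $q\ge 1$,
\begin{equation}
\mathbb{P}\{Q_{\lambda}(P) = q\} = \mathbb{P}\{Q_{\lambda}(P) = 0\}\cdot \alpha_q(P)\cdot \left(\tfrac{N\lambda}{\mu}\right)^q,
\end{equation}
so the full distribution is determined by the sequence $(\alpha_q(P))_{q\ge 0}$ together with the common factor $(N\lambda/\mu)^q$ and the normalization constant. Stochastic dominance is equivalent to $\mathbb{P}\{Q(P')\ge q\}\le \mathbb{P}\{Q(P)\ge q\}$ for all $q$, and since $\alpha_0(P)=1$ for both systems by convention, a natural route is to compare the two distributions \emph{term by term} after accounting for normalization. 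The key simplification that condition~\eqref{eq:stoch_dom_BD_condition} buys us is a control on the ratios $\alpha_{q-1}/\alpha_q$, which are precisely the quantities governing how mass is distributed across successive occupancy levels.

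First I would reformulate the problem in terms of the normalized weights. Writing $x\coloneqq N\lambda/\mu$ and $a_q \coloneqq \alpha_q(P)\,x^q$, $a'_q \coloneqq \alpha_q(P')\,x^q$, the distributions are $\mathbb{P}\{Q(P)=q\} = a_q / \sum_{k\ge 0} a_k$ and likewise for $P'$. Stochastic dominance $Q(P')\le_{\mathrm{st}} Q(P)$ is then the statement that the normalized sequence $(a'_q)$ puts more weight on small indices than $(a_q)$ does. A standard and clean sufficient condition for this is the \emph{monotone likelihood ratio} (MLR) ordering: if the ratio $a'_q/a_q$ is nonincreasing in $q$ (equivalently $a'_{q-1}a_q \ge a'_q a_{q-1}$ for all $q\ge 1$), then $Q(P')\le_{\mathrm{st}} Q(P)$, because MLR dominance implies stochastic dominance. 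Observe that $a'_{q-1}a_q \ge a'_q a_{q-1}$ is, after cancelling the common power $x^{2q-1}$, exactly $\alpha_{q-1}(P')\alpha_q(P)\ge \alpha_q(P')\alpha_{q-1}(P)$, which rearranges precisely to the hypothesis~\eqref{eq:stoch_dom_BD_condition}. So the plan is: (i) verify that~\eqref{eq:stoch_dom_BD_condition} is equivalent to the likelihood ratio $\mathbb{P}\{Q(P')=q\}/\mathbb{P}\{Q(P)=q\}$ being nonincreasing in $q$, and (ii) invoke the implication MLR $\Rightarrow$ usual stochastic order.

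Rather than cite the MLR implication as a black box, I would include the short self-contained argument, since it is elementary here. The likelihood ratio $r_q \coloneqq \mathbb{P}\{Q(P')=q\}/\mathbb{P}\{Q(P)=q\}$ equals $\big(\mathbb{P}\{Q(P')=0\}/\mathbb{P}\{Q(P)=0\}\big)\cdot \alpha_q(P')/\alpha_q(P)$, and condition~\eqref{eq:stoch_dom_BD_condition} makes $\alpha_q(P')/\alpha_q(P)$ nonincreasing, hence $r_q$ nonincreasing. A nonincreasing likelihood ratio forces the two survival functions to cross at most once with $P'$ below $P$: concretely, if $\mathbb{P}\{Q(P')\ge q_0\} > \mathbb{P}\{Q(P)\ge q_0\}$ held for some $q_0$, then since both are probability distributions summing to one, there would have to be an index where $r_q$ increases, contradicting monotonicity. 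I would phrase this via the standard chain: a nonincreasing $r_q$ yields $\mathbb{P}\{Q(P')\le q\}\ge \mathbb{P}\{Q(P)\le q\}$ for all $q$ by summing the inequality $r_k\ge r_q$ for $k\le q$ against the reverse for $k>q$ and using that both distributions normalize to one.

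The main obstacle is the normalization bookkeeping: the two systems have \emph{different} normalization constants $\mathbb{P}\{Q(P')=0\}$ and $\mathbb{P}\{Q(P)=0\}$, so the term-by-term comparison of $\alpha_q(P')$ against $\alpha_q(P)$ does \emph{not} translate directly into a comparison of probabilities — this is exactly the subtlety flagged in Remark~\ref{remark:BD}, where it is noted that $\alpha_q(P')\le\alpha_q(P)$ for all $q$ is by itself insufficient. The virtue of passing to the likelihood ratio is that the unknown constant prefactor $\mathbb{P}\{Q(P')=0\}/\mathbb{P}\{Q(P)=0\}$ is a positive constant independent of $q$, so it does not affect \emph{monotonicity} of $r_q$, and the MLR implication handles the normalization automatically. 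Thus the only genuine content is recognizing~\eqref{eq:stoch_dom_BD_condition} as the MLR condition in disguise; once that identification is made, the stochastic dominance follows from the general principle with no further estimates required.
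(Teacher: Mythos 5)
Your proof is correct and essentially coincides with the paper's: the telescoping of condition~\eqref{eq:stoch_dom_BD_condition} into $\alpha_l(P')\alpha_k(P)\ge\alpha_k(P')\alpha_l(P)$ for $l<k$ is exactly the paper's key step, and your invocation of ``monotone likelihood ratio implies stochastic order'' is precisely the double-sum cross-multiplication argument the paper writes out by hand. Identifying the hypothesis as the likelihood-ratio ordering is a nice conceptual repackaging, but it is the same proof.
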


\begin{proof}
We will show that $\mathbb{P}\{ Q(P)\ge q \} - \mathbb{P}\{Q(P') \ge q\} \ge 0$ for any $q\ge 1$. Relying on~\eqref{eq:general_prob}, this is equivalent to
\begin{equation}
\dfrac{\sum\limits_{k=q}^{\infty}\alpha_k(P)\left(\frac{N\lambda}{\mu}\right)^k}{\sum\limits_{k=0}^{\infty}\alpha_k(P)\left(\frac{N\lambda}{\mu}\right)^k}
- \dfrac{\sum\limits_{l=q}^{\infty}\alpha_l(P')\left(\frac{N\lambda}{\mu}\right)^l}{\sum\limits_{l=0}^{\infty}\alpha_l(P')\left(\frac{N\lambda}{\mu}\right)^l} \ge 0.
\end{equation}
The denominator of the resulting fraction is always non negative, simplification of the numerator yields the following equivalent expression,
\begin{equation}
\sum\limits_{k=q}^{\infty}\sum\limits_{l=0}^{q-1} \left(\frac{N\lambda}{\mu}\right)^{k+l} \left\{ \alpha_k(P)\alpha_l(P')   -   \alpha_k(P')\alpha_l(P)\right\} \ge 0.
\end{equation}
Note that for  any $l<k$ we have
\begin{equation}
 \frac{\alpha_l(P')}{\alpha_k(P')} =  \prod\limits_{i = l+1}^k \frac{\alpha_{i-1}(P')}{\alpha_{i}(P')} \ge \prod\limits_{i = l+1}^k \frac{\alpha_{i-1}(P)}{\alpha_{i}(P)}  = \frac{\alpha_l(P)}{\alpha_k(P)}, 
\end{equation}
which concludes the proof.
\end{proof}

The condition~\eqref{eq:stoch_dom_BD_condition} on the edge selection probabilities can intuitively be justified when we compare the stationary distributions of the total number of jobs $Q(P')$ and $Q(P)$ to the stationary distributions of two birth-and-death processes with common arrival rates $N\lambda$ for all $q\ge 0$ and death rates $\mu\frac{\alpha_{q-1}(P')}{\alpha_{q}(P')}$ and $\mu\frac{\alpha_{q-1}(P)}{\alpha_{q}(P)}$ for all $q\ge 1$, respectively. Observe that the stationary distribution of the number of particles in the first birth-and-death process is given by
\begin{equation}
\pi'(q) = C' \frac{\left(N\lambda\right)^q}{\prod_{i=1}^q\mu\frac{\alpha_{i-1}(P')}{\alpha_{i}(P')}} = C'  \alpha_q(P')\left(\frac{N\lambda}{\mu}\right)^q = \mathbb{P}\{Q(P') = q\},
\end{equation}
a similar expression holds true for the second birth-and-death process.
When the death rates of the former process are always larger, due to~\eqref{eq:stoch_dom_BD_condition}, there is a higher drift toward the origin compared to the second process.
Hence, the corresponding birth-and-death process will yield fewer particles in the system, and also $Q(P')$ will be smaller than $Q(P)$.

\section{Simulation results for support of Conjecture~\ref{conj:dom_red}}\label{app:simulation}
We illustrate Conjecture~\ref{conj:dom_red} for larger systems and/or moderate load values by simulation results that compare the classical power-of-two policy with power-of-two policies for two specific graph structures:
\begin{enumerate}
\item The \textit{homogeneous ring} with $N$ servers as introduced in Subsection~\ref{subsec:intro_example}.
\item A \textit{homogeneous grid graph} with $N$ vertices (or servers) and homogeneous edge selection probabilities, with $N$ assumed to be a square. Each server can be identified with a tuple $(i,j)$, with $i,j=1,\dots,\sqrt{N}$. In the grid graph, server $(i,j)$ is connected to servers $(i-1,j), (i,j-1),(i+1,j)$ and $(i,j+1)$, where the indices are computed modulo $\sqrt{N}$. This yields $2N$ edges in total. Each existing edge $e = \{(i_1,j_1),(i_2,j_2)\}$ has an identical selection probability $p_e = 1/(2N)$.
\end{enumerate}
Settings with $N\in (9,49,196)$ servers and load $\rho \in (0.3, 0.5, 0.7, 0.9)$ are considered.
For each pair $(\rho,N)$, 50 simulation runs are conducted, each consisting of 10 million coupled arrival and departure events. The above constructions yield stable systems as long as $\rho<1$.

Figures~\ref{fig:coc_ring_errorbars} and~\ref{fig:coc_grid_errorbars} display the average difference between the cumulative distributions of $Q_N(P)$ and $Q^{*}_N$, i.e., the total number of jobs of a system with either the ring graph or the grid graph and a uniform complete graph, for various values of $N$ and $\rho$ as discussed above.
The widths of corresponding 95$\%$ confidence intervals are indistinguishable on the scale of the $y$-axis, and are hence omitted from the figures. 

It is clear that all settings support Conjecture~\ref{conj:dom_red}. Moreover, the benefit of the classical power-of-two setting becomes more profound for higher loads. This suggests that the results in Subsection~\ref{sec:main_results} hold beyond the light-traffic regime. However, it is known that the cumulative distributions of (scaled) $Q_N(P)$ and $Q^{*}_N$ vanishes in the heavy-traffic limit for $\rho\uparrow 1$~\cite[Theorem~1]{Cardinaels2022}, though this is not yet observable in these systems for pre-limit values.
Furthermore, the grid graph yields twice as many server pairs available for sampling compared to the ring graph, which translates into a smaller difference between the cumulative distributions of $Q_N(P)$ and $Q^{*}_N$ (note the difference in scales).

\begin{figure}[h]
\centering
\includegraphics[scale=1]{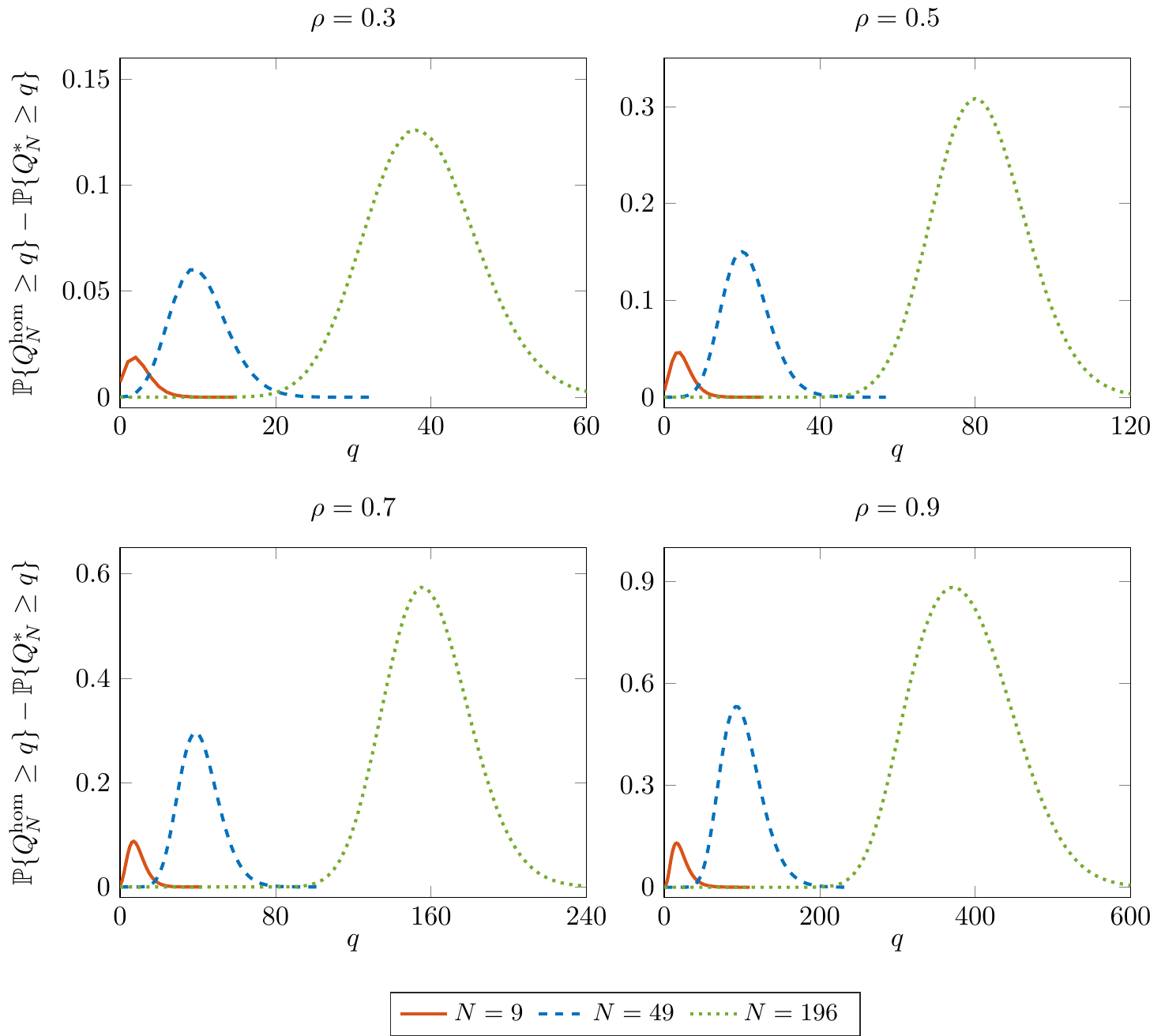}
\caption{A simulation based comparison between the stationary distribution of the total number of jobs for systems with the uniform complete graphs, $Q_N^*$, and the homogeneous ring, $Q_N^{\text{hom}}$, as underlying structures for various values of $N$ and $\rho$.}
\label{fig:coc_ring_errorbars}
\end{figure}

\begin{figure}[t]
\centering
\includegraphics[scale=1]{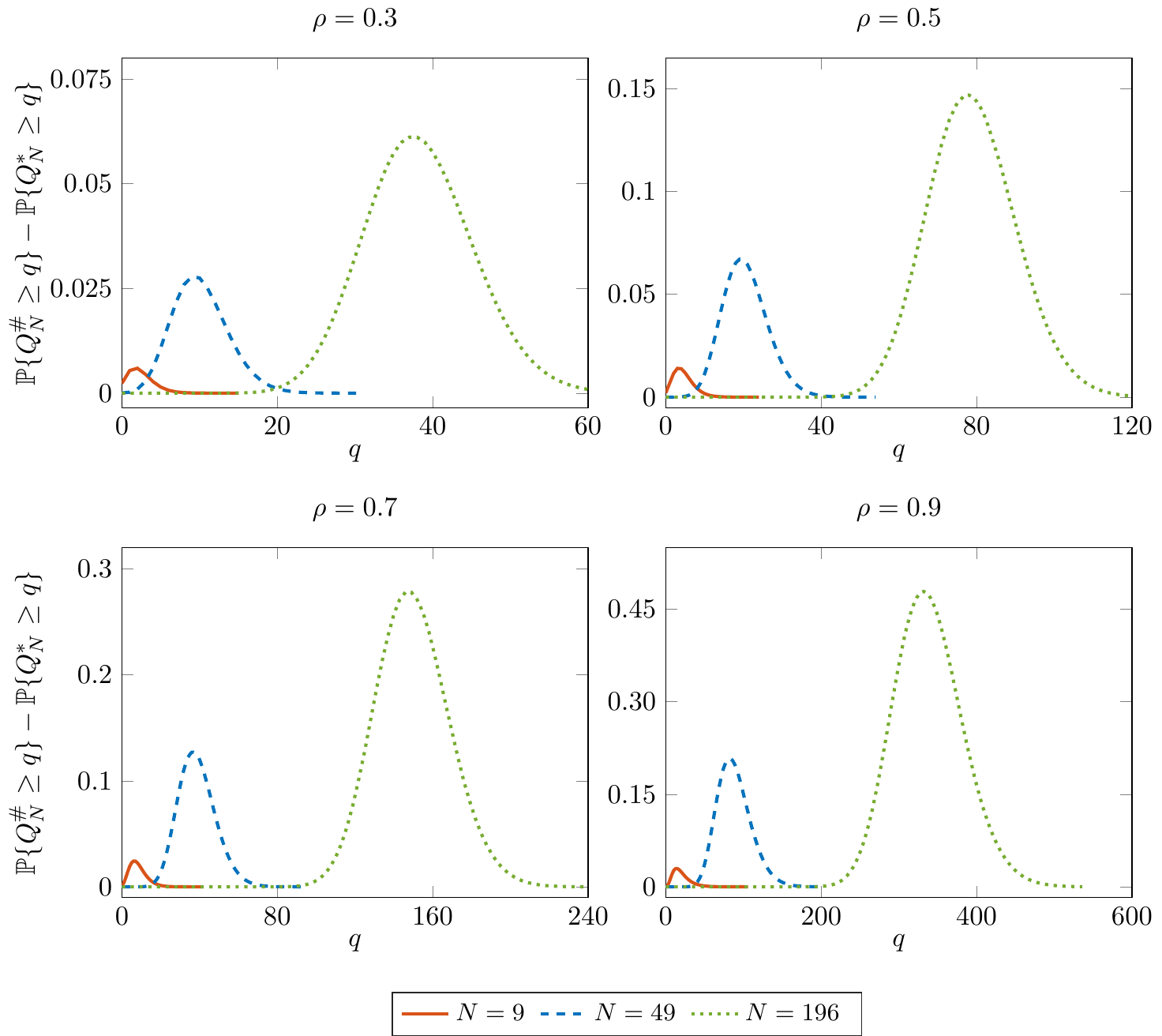}
\caption{A simulation based comparison between the stationary distribution of the total number of jobs for systems with the uniform complete graphs, $Q_N^*$, and the homogeneous grid, $Q_N^{\#}$, as underlying structures for various values of $N$ and $\rho$.}
\label{fig:coc_grid_errorbars}
\end{figure}

\section{Proofs of Lemmas \ref{lem:cos_complete} and \ref{lem:cos_ring}} \label{app:cos}
\begin{proof}[Proof of Lemma~\ref{lem:cos_complete}]
The main difference with the proof of Lemma~\ref{lem:uniform complete graph} is the fact that the state descriptor takes into account the types and order of the \textit{waiting} jobs and that the idle servers are incorporated in the product-form expression. For $q\ge 0$ we aim to find the stationary probabilities for the total number of jobs present in the system $Q^{\text{*,c.o.s.}}_4$, i.e.,
\begin{equation}\label{eq:sum_cos_compl}
\pr{Q^{\text{*,c.o.s.}}_4 = q} = \sum\limits_{(\boldsymbol{c},\boldsymbol{u})\in \mathcal{S}_q} \pi^{\text{c.o.s.}}(\boldsymbol{c},\boldsymbol{u})
\end{equation}
with 
$
\mathcal{S}_q \coloneqq \{ (\boldsymbol{c},\boldsymbol{u}) \in \mathcal{E}^{Q'}\times \{1,\dots,N\}^L \colon  Q'+(N-L) = q\} 
$
and the detailed stationary distribution $\pi^{\text{c.o.s.}}(\boldsymbol{c},\boldsymbol{u})$ as presented in~\eqref{eq:stat_distr_cos}.
When $q\ge 4$ there are three options, 2, 1 or 0 servers are idle, which partition $\mathcal{S}_q$ into three subsets. We investigate each part separately.

$|\boldsymbol{u}| = 2$:
If there are two idle servers, say $u_1$ and $u_2$, and waiting jobs present, then all $q-2$ waiting jobs must be of type $\{u_3,u_4\}$.
Hence, the contribution to~\eqref{eq:sum_cos_compl} is given by
\begin{equation}
4\cdot 3 \cdot C'\left(\frac{4\lambda\frac{1}{6}}{2\mu}\right)^{q-2}\frac{\mu}{4\lambda\frac{1}{2}}\frac{\mu}{4\lambda\frac{5}{6}} = C'\frac{1}{5} \left(\frac{\lambda}{3\mu}\right)^{q-4}.
\end{equation}

$|\boldsymbol{u}| = 1$. If there is a single idle server, say $u_1$, then the waiting jobs can be of any type expect for $\{u_1,u_i\}$ with $i=2,3,4$. This implies that the server rate sequence $(|c_1|,|c_1\cup c_2|,\dots,|c_1\cup\dots\cup c_{q-3}|)$ is of the form 
\[
(\underbrace{2,\dots,2}_{q_1},\underbrace{3,\dots,3}_{q-3-q_1})
\]
with $q_1 = 1,\dots,q-3$. Taking into account the number of states that yield the above server rate sequences, we find that the contribution to~\eqref{eq:sum_cos_compl} is given by
\begin{equation}
\begin{array}{rl}
& 4C'\left(\frac{4\lambda\frac{1}{6}}{\mu}\right)^{q-3}\frac{\mu}{4\lambda\frac{1}{2}}\left\{3\left(\frac{1}{2}\right)^{q-3} + 2\sum\limits_{k=1}^{q-4} \left(\frac{1}{2}\right)^{q_1}\left(\frac{3}{3}\right)^{q-3-q_1}\right\}\\
 
 = &C'\left(\frac{\lambda}{\mu}\right)^{q-4}\left\{4\left(\frac{2}{3}\right)^{q-3}-2\left(\frac{1}{3}\right)^{q-3}\right\}.
\end{array}
\end{equation}

$|\boldsymbol{u}| = 0$. If all servers are occupied, then the waiting jobs can be of any type. Hence, the server rate sequence $(|c_1|,|c_1\cup c_2|,\dots,|c_1\cup\dots\cup c_{q-4}|)$ is of the form 
\[
(\underbrace{2,\dots,2}_{q_1},\underbrace{3,\dots,3}_{q_2},\underbrace{4,\dots,4}_{q-4-q_1-q_2})
\]
with $q_1 = 1,\dots,q-4$ and $q_2 = 0,\dots, q-4-q_1$.
Taking into account the number of states that yield the above server rate sequences, we obtain that the contribution to~\eqref{eq:sum_cos_compl} amounts to 
\begin{equation}
\begin{array}{c}
 C'\left(\frac{4\lambda\frac{1}{6}}{\mu}\right)^{q-4}\left\{
6\left(\frac{1}{2}\right)^{q-4}
+8\sum\limits_{q_1 = 1}^{q-5} \left(\frac{1}{2}\right)^{q_1}\left(\frac{3}{3}\right)^{q-4-q_1}
+\sum\limits_{q_1 = 1}^{q-5} \left(\frac{1}{2}\right)^{q_1}\left(\frac{6}{4}\right)^{q-4-q_1} \dots \right.\\
 
\left.+4\sum\limits_{q_1 = 1}^{q-6} \sum\limits_{q_2 = 1}^{q-5-q_1} \left(\frac{1}{2}\right)^{q_1}\left(\frac{3}{3}\right)^{q_2}\left(\frac{6}{4}\right)^{q-4-q_1-q_2}
\right\}
 
 = C'\left(\frac{\lambda}{\mu}\right)^{q-4}\left\{\frac{1}{2}\left(\frac{1}{3}\right)^{q-4}-4\left(\frac{2}{3}\right)^{q-4} + \frac{9}{2}\right\}.
\end{array}
\end{equation}

This results in 
\begin{equation}
\pr{Q^{\text{*,c.o.s.}}_4 = q} =  C'\frac{9}{40} \left(\frac{\lambda}{\mu}\right)^{q-4}\left\{
20-\frac{5\cdot 2^5}{3^3} \left(\frac{2}{3}\right)^{q-4} +\frac{2^2}{3^3} \left(\frac{1}{3}\right)^{q-4}
\right\}
\end{equation}
for $q\ge 1$. Together with $\pr{Q^{\text{*,c.o.s.}}_4 = 0} =C'\left(\frac{\lambda}{\mu}\right)^{-4}$ this yields the stationary distribution~\eqref{eq:stat_complete_cos} after summation. This concludes the proof.
\end{proof}

\begin{proof}[Proof of Lemma~\ref{lem:cos_ring}] This proof follows the same reasoning as the the proof of Lemma~\ref{lem:cos_complete}, taking into account that there are now only four possible job types. With 
$
\mathcal{S}_q \coloneqq \{ (\boldsymbol{c},\boldsymbol{u}) \in \mathcal{E}^{Q'}\times \{1,\dots,N\}^L \colon  Q'+(N-L) = q\} 
$ we find that
\begin{equation}
\begin{array}{rcl}
\pr{Q^{\text{hom,c.o.s.}}_4 = q} & = & \sum\limits_{(\boldsymbol{c},\boldsymbol{u})\in\mathcal{S}^q} \pi^{\text{c.o.s.}}(\boldsymbol{c},\boldsymbol{u})\\

& = & 8C' \left(\frac{4\lambda\frac{1}{4}}{2\mu}\right)^{q-2}\frac{\mu}{4\lambda\frac{1}{2}}\frac{\mu}{4\lambda\frac{3}{4}}\\
& & +4C' \left(\frac{4\lambda\frac{1}{4}}{\mu}\right)^{q-3} \frac{\mu}{4\lambda\frac{1}{2}} \left\{2\left(\frac{1}{2}\right)^{q-3} + \sum\limits_{q_1 = 1}^{q-4} \left(\frac{1}{2}\right)^{q_1}\left(\frac{2}{3}\right)^{q-3-q_1} \right\}  \\

& & +C' \left(\frac{4\lambda\frac{1}{4}}{\mu}\right)^{q-4} \left\{
4\left(\frac{1}{2}\right)^{q-4} + 4\sum\limits_{q_1=1}^{q-5} \left(\frac{1}{2}\right)^{q_1}\left(\frac{2}{3}\right)^{q-4-q_1} + \sum\limits_{q_1=1}^{q-5} \left(\frac{1}{2}\right)^{q_1}\left(\frac{4}{4}\right)^{q-4-q_1}  \right.\\
& & \left. + \sum\limits_{q_1=1}^{q-6}\sum\limits_{q_2=1}^{q-5-q_1} \left(\frac{1}{2}\right)^{q_1}\left(\frac{2}{3}\right)^{q_2} \left(\frac{4}{4}\right)^{q-4-q_1-q_2} \right\}\\

& = & C'\left(\frac{\lambda}{\mu}\right)^{q-4} \left\{ 5+ \frac{1}{2}\left(\frac{2}{3}\right)^{q-4} -2 \left(\frac{2}{3}\right)^{q-4} \right\}
\end{array}
\end{equation}
for $q\ge 1$.
Together with $\pr{Q^{\text{hom,c.o.s.}}_4 = 0} =C'\frac{11}{48}\left(\frac{\lambda}{\mu}\right)^{-4}$ this yields the stationary distribution~\eqref{eq:stat_ring_cos} after summation. This concludes the proof.
\end{proof}

\end{appendices}

\end{document}